\newtheorem{theorem}{Theorem}[section]
\newtheorem{lemma}[theorem]{Lemma}
\newtheorem{corollary}[theorem]{Corollary}
\newtheorem{proposition}[theorem]{Proposition}
\theoremstyle{remark}
\theoremstyle{definition}
\numberwithin{equation}{section} \makeatother
\DeclareMathOperator{\Cdb}{{\mathbb C}}
\DeclareMathOperator{\Ndb}{{\mathbb N}}
\begin{document}

\title[Ordered involutive operator spaces]{Ordered involutive operator spaces}

\author[David P. Blecher]{David P. Blecher}
\address{Department of Mathematics, University of Houston, Houston, TX
77204-3008}
\email{dblecher@math.uh.edu}
\author[Kay Kirkpatrick]{Kay Kirkpatrick}
\address{Department of Mathematics, MIT, 77 Massachusetts Avenue,
    Cambridge, MA 02139-4307}
\email{kay@math.mit.edu}
 \author[Matthew Neal]{Matthew Neal}
\address{Department of Mathematics,
Denison University, Granville, OH 43023}
\email{nealm@denison.edu}
\author[Wend Werner]{Wend Werner}
\address{Mathematisches Institut, Einsteinstr. 62,
D-48149 M\"unster, Germany} 
\email{wwerner@math.uni-muenster.de} 
\thanks{To appear, {\em Positivity}}
\thanks{*Blecher was partially supported by grant DMS 0400731 from
the National Science Foundation.  Kirkpatrick was partially
supported by an NSF REU grant.  Neal was supported by Denison
University. Werner was supported by the SFB 487 Geometrische
Strukturen in der Mathematik, at the Westf\"alische
Wilhelms-Universit\"at, supported by the Deutsche
Forschungsgemeinschaft.}
\subjclass{Primary 47L07, 47L05; Secondary 46B40, 46L07, 46L08, 47B60, 47B65}
 
\begin{abstract}
This is a companion to recent papers of the authors; here we
construct the `noncommutative
 Shilov boundary' of a (possibly nonunital) selfadjoint ordered
space of Hilbert space operators.  The morphisms in the universal property of the boundary
preserve order.
 As an application, we consider `maximal' and `minimal'
unitizations of such ordered operator spaces.
\end{abstract}

\maketitle

\section{Introduction}

  An operator space is a
closed linear space of bounded operators between Hilbert spaces, or,
equivalently, a subspace of a C*-algebra. Although
 ordered operator spaces containing the identity operator
  are well understood
 and important
 (these are known as operator systems \cite{CE,Pau}),
 {\em nonunital} ordered
operator spaces have barely been studied at all. This is despite the
fact that they occur very naturally; for example, consider the
linear span of three generic positive matrices in the $5 \times 5$
matrices $M_5$.   Indeed, the only theory addressing such spaces
which we are  aware of is contained in \cite{Sch,OSWU,WW,BW,BN} and
a series of papers by Karn (see e.g.\ \cite{K1,K2} and references
therein). In view of the importance of the notion of operator
positivity, we offer, in this companion paper to \cite{BW,BN}, some
results on this topic.  In particular, we
construct the `noncommutative
 Shilov boundary' of such a space $X$, and use this to construct a
`maximal unitization' of $X$. At the end of the paper we will
illustrate these two main concepts in the very simple
case of spaces of finite
matrices.

 Our main tool is the `noncommutative Shilov boundary'.  We
remark that this tool was used from the beginning in the case of
unital operator systems \cite{SOC}, and the commutative variant is
fundamental in the theory of classical function spaces. In 2001, as
a `Research Experience for Undergraduates' project, the second
author considered the possibility of a noncommutative Shilov
boundary for `selfadjoint operator spaces' \cite{TSBA}. It is well
known (see e.g.\ \cite{Ham,BLM}) that the noncommutative analogue of
the Shilov boundary of an operator space should be a {\em ternary
ring of operators}, or TRO for short.  A TRO is a closed subspace of
a C*-algebra which is closed under products of the form $x y^* z$.
If $X$ is selfadjoint then its enveloping TRO will also be
selfadjoint, that is, it is a `$*$-TRO'. In \cite{BW} the first and
last author developed the theory of $*$-TROs, and the ways in which
they can be ordered. If $X$ is an ordered operator space, then one
might hope that the
 `arrows' in the universal  diagram/property for the
 noncommutative Shilov boundary may be
chosen to be (completely) positive.  This was done in \cite{BN} in a
general setting.  In the present paper we give the technical details
of how to adapt these results to the case of selfadjoint operator
spaces. For example, the proof of the just mentioned universal
property is necessarily quite different in our setting here.  As an
application, in the last part of the paper,
 we discuss  `maximal' and `minimal'
unitizations of nonunital ordered operator spaces.

Any unexplained notation can be found in \cite{BW,BN,BLM}. A map $T
: X \to Y$ between vector spaces which have involutions is called
{\em $*$-linear} or {\em selfadjoint}, if it is linear and $i(x^*) =
i(x)^*$ for all $x \in X$. A {\em selfadjoint operator space} is an
operator space $X$ with an involution $* : X \to X$, such that there
exists a complete isometry $i : X \to B(H)$ which is $*$-linear.
Alternatively, a selfadjoint operator space is an operator space $X$
with an involution $* : X \to X$, such that
$$\Vert [x_{ji}^* ] \Vert = \Vert [x_{ij} ] \Vert , \qquad n \in
\Ndb, \; [x_{ij} ] \in M_n(X) . $$ (To see the nontrivial direction
of this equivalence, suppose that $X$ is a concrete (not necessarily
selfadjoint) subspace of $B(H)$, and that $\tau : X \to X$ is an
involution satisfying the last centered equation.  Then the
map $$X \; \longrightarrow \;  M_2(B(H)) : x \; \mapsto \; \left[ \begin{array}{ccl} 0 & x \\
\tau(x)^* & 0
\end{array} \right],$$
is a $*$-linear complete isometry of $X$ onto a selfadjoint subspace
of a C*-algebra.)

 By an {\em ordered operator
space}, we mean a selfadjoint operator space $X$ with a specified
cone ${\mathfrak c}_n \subset M_n(X)$ for each $n \in \Ndb$,  such
that there exists a completely isometric $*$-linear map $T : X \to
B(H)$ such that $T$ is completely positive in the sense that
$T_n({\mathfrak c}_n) \subset M_n(B(H))_+$, for all $n \in \Ndb$. We
warn the reader that this notation is nonstandard, in other papers
$T$ is usually also required  to be a {\em complete order
embedding}, that is $T^{-1}$ is also completely positive on
Ran$(T)$. An ordered operator space is often written as a pair
$(X,({\mathfrak c}_n))$. We sometimes write ${\mathfrak c}_n$ as
$M_n(X)_+$, and write $x \geq 0$ for $x$ in this cone.
 There is another important variant of the theory, where we have only
one cone ${\mathfrak c} \subset X$, and $T$ above is positive.
However since this variant works out almost identically, we will not
explicitly mention it.

We end this introduction with the following remark which we will
refer to later.  One initial motivation to study nonunital ordered
operator spaces comes from the fact that the dual of a C*-algebra
has a very nice positive cone (namely, the positive linear
functionals), and is an operator space \cite[Section 1.4]{BLM}.
This leads one to hope that it may be treated as an ordered operator
space. However the situation  here is certainly more difficult than
one might first think:

\begin{proposition} \label{one}   If $A'$ is the dual Banach space
of a
nontrivial C*-algebra $A$,
with its usual cone, then there exists no isometric positive map
from $A'$ into another C*-algebra.  Thus $A'$ is not an ordered operator space.
\end{proposition}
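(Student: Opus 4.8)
The plan is to suppose, toward a contradiction, that there is a linear isometry $T \colon A' \to B$ into some C*-algebra $B$ which is positive, in the sense that it carries the cone of positive functionals of $A'$ into $B_+$. I will show that this forces the state space of $A$ to have diameter at most $1$, whereas a nontrivial $A$ always possesses two states at distance $2$.

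The first step is an elementary order estimate valid in any C*-algebra: if $a, b \ge 0$ then $\|a - b\| \le \max(\|a\|, \|b\|)$. Indeed, passing to the unitization if necessary, $0 \le b \le \|b\| 1$ and $0 \le a \le \|a\| 1$ give $-\|b\| 1 \le -b \le a - b \le a \le \|a\| 1$, so the self-adjoint element $a - b$ has spectrum in $[-\|b\|, \|a\|]$ and hence norm at most $\max(\|a\|, \|b\|)$. Now let $f, g$ be any two states of $A$, i.e.\ norm-one positive functionals. Since $T$ is positive, $T(f), T(g) \in B_+$, and since $T$ is isometric, $\|T(f)\| = \|f\| = 1 = \|g\| = \|T(g)\|$ and $\|T(f) - T(g)\| = \|f - g\|$. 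The estimate then yields $\|f - g\| = \|T(f) - T(g)\| \le \max(\|T(f)\|, \|T(g)\|) = 1$. Thus any two states of $A$ would be within distance $1$.

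It remains---and this is the crux---to exhibit two states of $A$ at distance $2$. Here I would work in the enveloping von Neumann algebra $M = A''$, whose predual is $A'$ and whose normal states are precisely the states of $A$. Since $A$ is nontrivial (in particular $A \ne \Cdb$), $M$ is a von Neumann algebra different from $\Cdb$ and so contains a projection $p$ with $0 \ne p \ne 1$. Choosing normal states $f, g$ on $M$ with $f(p) = 1$ and $g(p) = 0$ and testing against the self-adjoint unitary $2p - 1 \in M$ gives $(f - g)(2p - 1) = 2$, so $\|f - g\| = 2$ when these functionals are regarded on $M$. Because $f - g$ is weak*-continuous on $M$ and, by Goldstine's theorem, the unit ball of $A$ is weak*-dense in that of $M$, this supremum is already attained over the unit ball of $A$; hence $\|f - g\|_{A'} = 2$. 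This contradicts the bound of the previous paragraph, proving the first assertion. The second is then immediate: being an ordered operator space would require a completely isometric, $*$-linear, completely positive map of $A'$ into some $B(H)$, and such a map is in particular an isometric positive map into a C*-algebra, which cannot exist.

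The step I expect to demand the most care is the transfer of the distance-$2$ computation from $M$ down to $A$: one must be certain that the supremum defining $\|f - g\|$ over $\mathrm{ball}(A)$ equals the value produced by the projection $p \in M$, and this is exactly where weak*-continuity of the normal functional $f - g$ together with the weak*-density of $\mathrm{ball}(A)$ in $\mathrm{ball}(M)$ is needed. (If $A$ happens already to contain a nontrivial projection, the passage through $A''$ can be bypassed entirely.)
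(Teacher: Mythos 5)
Your proof is correct, and it reaches the contradiction by a genuinely different route than the paper. Both arguments pivot on the same key fact --- the difference of two positive contractions in a C*-algebra has norm at most $1$ (the paper proves this via the numerical-radius formula for selfadjoint operators, you via the order/spectral estimate $\|a-b\|\le\max(\|a\|,\|b\|)$; these are essentially interchangeable). Where you diverge is in producing the norm-$2$ pair of positive functionals. The paper first settles the special case $A=\ell^\infty_2$ (where $e_1,e_2\in\ell^1_2$ give the distance-$2$ pair explicitly), and then reduces the general case to it by a duality/functoriality chain: it passes to the predual formulation $M_*$ for $M=A''$, takes the bidual map $T''$, uses injectivity of the two-dimensional algebra $N=\mathrm{Span}\{p,p^\perp\}$ to get a completely positive unital projection $P:M\to N$, and composes $T''\circ P'$ to manufacture a positive isometry on $\ell^1_2$. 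You instead prove the general statement in one stroke: a nontrivial projection $p\in A''$ yields two normal states $f,g$ with $(f-g)(2p-1)=2$, and Goldstine plus weak*-continuity transfers $\|f-g\|=2$ down to $A'$, contradicting the diameter bound directly. What the paper's route buys is modularity --- it exhibits $\ell^1_2$ as the minimal obstruction sitting inside every such dual, which is conceptually informative; what your route buys is economy --- no injectivity, no conditional expectation, no bidual of the map $T$, only Goldstine and elementary spectral theory. One cosmetic point: the supremum $\sup\{|(f-g)(a)|:a\in\mathrm{Ball}(A)\}$ equals $2$ but need not be \emph{attained} on $\mathrm{Ball}(A)$; your wording should say the two suprema coincide, which is exactly what weak*-density gives.
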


\begin{proof}  First suppose that $A = \ell^\infty_2$.
In this case this result was mentioned in \cite{BN}
as a consequence
 of some rather intricate facts.  However there is a simple
direct proof.   Suppose that
$u : \ell^1_2 \to B(H)$ is an isometric positive map.
 If $u(e_i) = T_i$, then $T_i$ are positive contractions.
Since $T_1 - T_2$ is selfadjoint, a well known
formula gives  $$\Vert T_1 - T_2 \Vert = \sup \{ |
\langle (T_1 - T_2) \zeta , \zeta \rangle | :
 \zeta \in {\rm Ball}(H) \} \leq 1 .$$
This contradicts  the fact that $2 = \Vert (1,-1) \Vert_{\ell^1_2}
 = \Vert T_1 - T_2 \Vert$.

We finish the proof by proving the more general fact that if $M$ is
any von Neumann algebra of dimension $> 1$,  then there exists no
positive isometry $T$ from the predual $M_*$ into another C*-algebra
$B$. For if there did exist such $T$, then $T'' : M' \to B''$ would
be a positive isometry into a C*-algebra. Let $p$ be a nontrivial
projection in $M$, and let $N = {\rm Span} \{ p,p^\perp \}$, the
injective 2 dimensional C*-algebra. By injectivity, there exists a
completely positive unital projection $P : M \to N$.  Then $P' :  N'
\to M'$ is a positive isometry. Thus $T'' \circ P'$ induces a
positive isometry from $\ell^1_2$ into a C*-algebra, contradicting
the previous paragraph.  \end{proof}

\section{The selfadjoint and the ordered noncommutative Shilov
boundary}

The natural  morphisms between TROs are the {\em ternary morphisms},
that is the maps satisfying $T(xy^*z) = T(x) T(y)^* T(z)$.
 As mentioned in the introduction of \cite{BW}, the basic
facts about TROs and their ternary morphisms have selfadjoint
variants valid for $*$-TROs and $*$-linear ternary morphisms.  We
will use the term `ternary $*$-morphism' for the latter. Similarly,
as we shall see next, there is a selfadjoint variant of the
noncommutative Shilov boundary of an operator space.  That is,
Hamana's theory of ternary envelopes (see \cite{Ham,BLM}) easily
restricts to the context of selfadjoint subspaces  of C*-algebras.

By a {\em ternary $*$-extension} of a selfadjoint operator space
$X$, we mean a pair $(Z,i)$ consisting of a $*$-TRO $Z$, and a
$*$-linear complete isometry $i : X \to Z$  such that `odd
polynomials' with variables taken from $i(X)$ are dense in $Z$.
Equivalently, there is no nontrivial subTRO of $Z$ containing
$i(X)$. We define a {\em ternary $*$-envelope} of $X$ to be any
ternary $*$-extension $(Y,i)$ with the universal property of the
next theorem.  This could also be called the `selfadjoint
noncommutative Shilov boundary'.

\begin{theorem} \label{7tru}
If $X$ is a selfadjoint operator space, then there exists a ternary
$*$-extension $(Y,j)$ of $X$ with the following universal property:
Given any ternary $*$-extension $(Z,i)$ of $X$ there exists a
(necessarily unique and surjective) selfadjoint ternary morphism
$\pi : Z \rightarrow Y$, such that $\pi \circ i = j$.
\end{theorem}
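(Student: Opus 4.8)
The plan is to build $(Y,j)$ on top of the existing (non-involutive) theory of ternary envelopes and then to show that the involutive refinements are essentially automatic. First I would use the equivalence established just before Proposition~\ref{one}: via the $2\times 2$ matrix trick, we may assume $X$ is a concrete selfadjoint subspace of a C*-algebra $A$, with involution the restriction of the adjoint of $A$. Let $Z_0$ be the sub-TRO of $A$ generated by $X$. Since $X=X^*$, the adjoint of any odd product $x_1 x_2^* x_3 \cdots$ of elements of $X$ is again such a product, so $Z_0$ is closed under adjoints; thus $(Z_0,\iota)$, with $\iota$ the inclusion, is a ternary $*$-extension of $X$, and in particular such extensions exist.

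Next I would produce the universal object and equip it with an involution. Regarding $X$ merely as an operator space, Hamana's theory (see \cite{Ham,BLM}) supplies a ternary envelope $(Y,j)$ with the universal property for \emph{all} ternary extensions. Realize $Y$ as a quotient $Z_0/J$, where $J$ is the largest TRO-ideal of $Z_0$ for which the quotient map is completely isometric on $X$, the associated surjective ternary morphism $Z_0\to Y$ being the quotient map. The point is that $J$ is selfadjoint: the adjoint on $Z_0$ is a conjugate-linear ternary bijection carrying TRO-ideals to TRO-ideals and fixing $X$ setwise, so $J^*$ is again an ideal with the same defining (and maximal) property, whence $J^*=J$. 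The involution of $Z_0$ therefore descends to a well-defined involution on $Y=Z_0/J$, under which $Y$ is a $*$-TRO (the descended triple product and involution inherit compatibility from $Z_0$, in the sense of \cite{BW}) and $j$ is a $*$-linear complete isometry. Hence $(Y,j)$ is a ternary $*$-extension.

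It then remains to upgrade the universal arrows to selfadjoint maps. Given any ternary $*$-extension $(Z,i)$, the non-involutive universal property of $(Y,j)$ yields a unique surjective ternary morphism $\pi:Z\to Y$ with $\pi\circ i=j$. I claim $\pi$ is automatically $*$-linear. Define $\pi^{\sharp}:Z\to Y$ by $\pi^{\sharp}(z)=\pi(z^*)^*$; a short computation using the $*$-TRO identity $(ab^*c)^*=c^*ba^*$ in both $Z$ and $Y$ shows that $\pi^{\sharp}$ is again a (linear) ternary morphism, and the $*$-linearity of $i$ and $j$ gives $\pi^{\sharp}\circ i = j$. By the uniqueness half of the universal property, $\pi^{\sharp}=\pi$, that is $\pi(z^*)=\pi(z)^*$ for all $z\in Z$, so $\pi$ is selfadjoint. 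Uniqueness of $\pi$ as a selfadjoint ternary morphism is inherited from its uniqueness as a ternary morphism, since $i(X)$ generates $Z$ as a TRO.

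The step I expect to demand the most care---and the one the authors signal as genuinely different from the unital/non-involutive situation---is the passage making $Y$ a $*$-TRO: one must verify that the boundary ideal $J$ is cut out by a condition symmetric under the involution and that the descended operations satisfy the $*$-TRO axioms of \cite{BW}. Once this $*$-structure on $Y$ is secured, the selfadjointness of the arrows drops out cleanly from uniqueness, as above. An alternative, more self-contained route would be to rerun Hamana's injective-envelope construction within the category of selfadjoint operator spaces, averaging over the involution to keep every intermediate object $*$-closed; this avoids citing the non-involutive theorem but replaces it with the same essential verifications.
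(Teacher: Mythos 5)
Your proposal is correct and takes essentially the same route as the paper: both realize Hamana's non-involutive ternary envelope as the quotient of a ternary $*$-extension by the largest boundary ideal, use that ideal's maximality to conclude it is selfadjoint (you via the conjugate-linear symmetry of the involution, the paper via the adjoint operator space identification $(Z/N)^* \cong Z^*/N^*$ --- the same argument in different dress), and then obtain the $*$-linearity of the universal arrow from uniqueness in the non-involutive universal property. Your $\pi^{\sharp}$ trick is simply an explicit version of what the paper dismisses as ``easy algebra to check that $\pi$ is $*$-linear.''
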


\begin{proof}  Let $(Z,i)$ be a ternary $*$-extension of $X$.
By the basic theory of the ternary envelope (see \cite{Ham} or
8.3.10 in \cite{BLM}), there exists a subbimodule  $N$
of $Z$ such that $Z/N$ is a copy of the
ternary envelope of $X$.
  For any operator space $X$, the adjoint space $X^* = \{ x^* : x \in
X \}$ is a well defined operator space, independent of the
representation of $X$ \cite[1.2.25]{BLM}. We have $(X/N)^* \cong
X^*/N^*$ completely isometrically, using \cite[1.2.15]{BLM}, for
example. Since the canonical map $j : X \to Z/N$ is completely
isometric, so is the canonical map $X = X^* \to (Z/N)^* \cong
Z^*/N^* = Z/N^*$.  However $N$ is the largest subbimodule $W$ of $Z$
such that the canonical map $X \to Z/W$ is completely isometric (see
\cite{Ham} or \cite[Theorem A11]{BShi}). Thus  $N^* \subset N$, so
that $N = N^*$. Hence the ternary envelope $Y = Z/N$ of $X$ is a
$*$-TRO, by observations in the introduction to \cite{BW}, and hence
is a ternary $*$-extension of $X$. If $(W,k)$ is any ternary
$*$-extension of $X$, then $W$ is a ternary extension of $X$, and by
the universal property of the envelope there exists a (necessarily
unique and surjective) ternary morphism $\pi : W \to Y$ with $\pi
\circ k = j$. It is easy algebra to check that $\pi$ is $*$-linear.
\end{proof}

It is now routine to extend the basic properties of ternary
envelopes (see e.g.\ \cite{Ham,BShi} or \cite[8.3.10 and
8.3.12]{BLM}) to ternary $*$-envelopes. We omit the details.
Indeed, part of the proof above shows, using routine arguments, that
any ternary $*$-envelope of $X$ is a ternary envelope of $X$.

Suppose next that $X$ is an ordered operator space, with  matrix
cones $({\mathfrak c}_n)$. We consider an ordered version of the
ternary envelope or noncommutative Shilov boundary.  That is, we
seek an ordered version of the universal property/diagram in Theorem
\ref{7tru}.  Except under extra hypotheses (for example, if the
positive cone of $X$ densely spans $X$), it is easy to show that the
embeddings $i : X \to Z$ occurring in the universal property/diagram
in the ordered case cannot be allowed in general to be arbitrary
complete order embeddings, or even arbitrary completely positive
complete isometries. We will usually need to limit the size of the
cone of $Z$. Fortunately there is an appropriate bound for this
cone, and this bound depends on the given cone of $X$.

More specifically, we assign a canonical cone ${\mathfrak d}$ to the
ternary $*$-envelope $({\mathcal T}(X),j)$, namely the intersection
of all natural cones (in the sense of \cite{BW}) containing
$(j({\mathfrak c}_n))$. To see that there exists at least one such
cone, note that if $i : X \to B$ is a completely positive complete
isometry into a C*-algebra, and if $W$ is the $*$-TRO generated by
$i(X)$, then by Theorem \ref{7tru}
 there is a
ternary $*$-morphism $\theta : W \to {\mathcal T}(X)$ with $\theta
\circ i = j$.  Thus ${\mathcal T}(X)$ is ternary $*$-isomorphic to a
quotient of $W$.  By \cite[Lemma 5.2]{BW}, this quotient of $W$ has
a natural  cone containing the image of $i({\mathfrak c})$.

As mentioned in \cite[Section 4]{BW}, the intersection of natural
cones (in the sense of \cite{BW}) is natural.  Hence ${\mathfrak d}$
is a natural cone on ${\mathcal T}(X)$ containing  $j({\mathfrak
c})$.
  We call ${\mathcal T}(X)$ equipped with this cone the
{\em ordered ternary envelope}, and write it as ${\mathcal T}^o(X)$.
This is the `ordered noncommutative Shilov boundary'.
  We remark that up to
the obvious notion of equivalence, the ordered ternary envelope is
well defined independently of any particular ternary $*$-envelope of
$X$.  This follows for example from the universal property below.

The following result is reminiscent of \cite[Theorem 5.3]{BN}, but
seems to require a completely different proof.  (Note that the proof
in the setting considered in \cite{BN} needs to use the notion of
`range tripotents'.  Since these may not be `central' in the sense
of \cite{BW}, they will not work for us here.

\begin{theorem}  \label{incot} Suppose that $(X,{\mathfrak c})$ is an
ordered operator space. Suppose that $\iota$ is a $*$-linear
completely positive complete isometry from $X$ into a $*$-TRO $Z$
such that $\iota(X)$ generates $Z$ as a $*$-TRO. Let ${\mathfrak e}$
denote the intersection of all natural cones of $Z$ (in the sense of
\cite{BW}) containing $\iota({\mathfrak c})$. Let ${\mathfrak d}$ be
the natural cone defining the ordered ternary $*$-envelope
${\mathcal T}^o(X)$. Then
 the canonical ternary $*$-morphism $\theta : Z \to
 {\mathcal T}^o(X)$ from Theorem {\rm \ref{7tru}}, takes
${\mathfrak e}$ onto ${\mathfrak d}$.
 \end{theorem}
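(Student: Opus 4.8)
The plan is to establish the two inclusions $\theta(\mathfrak{e}) \subseteq \mathfrak{d}$ and $\mathfrak{d} \subseteq \theta(\mathfrak{e})$ separately, in each case exploiting that $\mathfrak{e}$ and $\mathfrak{d}$ are by construction the \emph{smallest} natural cones (at each matrix level) containing $\iota(\mathfrak{c})$ and $j(\mathfrak{c})$ respectively, together with the identity $\theta \circ \iota = j$ supplied by Theorem \ref{7tru}. The entire argument rests on two functoriality properties of natural cones under the surjective ternary $*$-morphism $\theta : Z \to \mathcal{T}^o(X)$: that preimages of natural cones are natural, and that images of natural cones are natural. Throughout, $\theta$ is understood to act at each matrix level via $\theta_n$, and all cones are matricial.

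For the inclusion $\theta(\mathfrak{e}) \subseteq \mathfrak{d}$, I would argue directly from the defining intersection of $\mathfrak{d}$, so that no pushforward is needed. Fix any natural cone $\mathfrak{p}$ on $\mathcal{T}^o(X)$ containing $j(\mathfrak{c})$. The pullback $\theta^{-1}(\mathfrak{p})$ is a cone on $Z$, and since $\theta$ is a $*$-linear ternary morphism, precomposing with $\theta$ the representation witnessing naturality of $\mathfrak{p}$ witnesses that $\theta^{-1}(\mathfrak{p})$ is again natural. Moreover $\theta(\iota(\mathfrak{c})) = j(\mathfrak{c}) \subseteq \mathfrak{p}$ gives $\iota(\mathfrak{c}) \subseteq \theta^{-1}(\mathfrak{p})$, so $\theta^{-1}(\mathfrak{p})$ is one of the natural cones whose intersection defines $\mathfrak{e}$; hence $\mathfrak{e} \subseteq \theta^{-1}(\mathfrak{p})$, and applying $\theta$ together with its surjectivity yields $\theta(\mathfrak{e}) \subseteq \theta(\theta^{-1}(\mathfrak{p})) = \mathfrak{p}$. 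Intersecting over all such $\mathfrak{p}$ gives $\theta(\mathfrak{e}) \subseteq \mathfrak{d}$.

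For the reverse inclusion $\mathfrak{d} \subseteq \theta(\mathfrak{e})$, the strategy is to show that $\theta(\mathfrak{e})$ is itself a natural cone on $\mathcal{T}^o(X)$ containing $j(\mathfrak{c})$; minimality of $\mathfrak{d}$ then forces $\mathfrak{d} \subseteq \theta(\mathfrak{e})$. That $\theta(\mathfrak{e})$ contains $j(\mathfrak{c})$ is immediate from $j(\mathfrak{c}) = \theta(\iota(\mathfrak{c})) \subseteq \theta(\mathfrak{e})$, using $\iota(\mathfrak{c}) \subseteq \mathfrak{e}$. The substantive point is naturality: since $\mathfrak{e}$ is an intersection of natural cones it is itself natural (as recalled before the theorem), and one then invokes \cite[Lemma 5.2]{BW}, in its $*$-linear form, to the effect that the image of a natural cone under a surjective ternary $*$-morphism onto a quotient $*$-TRO is again natural. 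Applying this to $\mathfrak{e}$ and $\theta$ shows $\theta(\mathfrak{e})$ is natural, completing this inclusion and hence the equality.

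The main obstacle I expect is justifying the pushforward step, namely the $*$-linear version of \cite[Lemma 5.2]{BW}. The pullback direction is routine, since one simply precomposes a witnessing representation with $\theta$. The pushforward, by contrast, requires that a natural cone on $Z$ descend through the kernel of $\theta$ to a natural cone on the quotient, and it is precisely here that one must work entirely within the selfadjoint framework rather than importing the range-tripotent arguments of \cite{BN}, since those tripotents need not be central in the sense of \cite{BW}. I would therefore verify carefully that the construction producing the natural cone on the quotient in \cite[Lemma 5.2]{BW} respects the involution and delivers exactly $\theta(\mathfrak{e})$, and not merely \emph{some} natural cone containing $j(\mathfrak{c})$; with that in hand, the two minimality arguments above close the proof.
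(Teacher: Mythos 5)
Your second inclusion, $\mathfrak{d} \subseteq \theta(\mathfrak{e})$, is essentially the paper's own closing step: the paper likewise deduces it from the fact that $\theta$ carries natural cones to natural cones (justified there by a variant of \cite[Proposition 3.5]{BN}, i.e.\ that $\theta''$ maps open central tripotents to open central tripotents) together with minimality of $\mathfrak{d}$. The genuine gap is in your first inclusion. The claim that $\theta^{-1}(\mathfrak{p})$ is a natural cone on $Z$ is false whenever $\ker\theta \neq 0$, which is the generic situation (passing to a proper quotient is the whole point of the envelope). A natural cone in the sense of \cite{BW} arises from a ternary $*$-isomorphism of $Z$ onto a $*$-subTRO $W$ of a C*-algebra $A$, the cone corresponding to $W \cap A_+$; equivalently, it is of the form $Z \cap J(u)_+$ for an open central selfadjoint tripotent $u \in Z''$. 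In particular every natural cone is proper: $\mathfrak{q} \cap -\mathfrak{q} = \{0\}$. But $\ker\theta$ is a selfadjoint ternary ideal, so $\theta^{-1}(\mathfrak{p})$ contains the nonzero real-linear subspace of selfadjoint elements of $\ker\theta$, hence is not proper and cannot be natural. Your proposed justification --- precompose a representation witnessing naturality of $\mathfrak{p}$ with $\theta$ --- breaks down exactly because that witnessing representation must be injective (a ternary $*$-embedding), and injectivity is destroyed by composing with the non-injective $\theta$. Concretely: take $Z = \mathbb{C} \oplus \mathbb{C}$, $\theta$ the projection onto the first coordinate, $\mathfrak{p} = \mathbb{R}_+$; then the selfadjoint part of $\theta^{-1}(\mathfrak{p})$ is $\{(a,t) : a \geq 0, \ t \in \mathbb{R}\}$, which is none of the natural cones of $\mathbb{C} \oplus \mathbb{C}$ (these consist of sign-definite elements supported on a subset of the two coordinates).

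This is precisely the difficulty that forces the paper's intricate argument for the inclusion $\theta(\mathfrak{e}) \subseteq \mathfrak{d}$, and why the paper remarks that the proof must differ from that of \cite[Theorem 5.3]{BN}. Instead of pulling $\mathfrak{d}$ back, the paper works with the tripotents $u$ and $z$ associated to $\mathfrak{d}$ and $\mathfrak{e}$, decomposes $Z'' = I \oplus^\infty J$ with $I = (\theta'')^{-1}(J(u))$, splits $z = z_1 + z_2$ accordingly, shows $\theta''(z_1) = u$, and then shows $\iota(\mathfrak{c}) \subseteq \mathfrak{d}_{z_1}$, whence $\mathfrak{e} = \mathfrak{d}_{z_1}$ by minimality; only then does $\theta(\mathfrak{e}) \subseteq \mathfrak{d}$ follow, since $\theta''$ maps squares of selfadjoints in $J(z_1)$ to squares of selfadjoints in $J(u)$. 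In effect the paper constructs a natural cone on $Z$ which contains $\iota(\mathfrak{c})$ \emph{and} maps into $\mathfrak{d}$ --- the correct, and much more delicate, substitute for your pullback. Note also that you flagged the pushforward (your second step) as the main expected obstacle; in fact that step is the comparatively benign one, and the unfixable step as written is the pullback.
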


\begin{proof} \ This requires basic results
and notation from \cite{BW}.
  Let $u$ be the tripotent in ${\mathcal T}(X)''$ associated (by the
correspondence from \cite[Theorem 4.16]{BW})
 with
the natural cone ${\mathfrak d}$ in ${\mathcal T}(X)$. We write
$J(u)$ for the `Pierce 2-space' of $u$ in ${\mathcal T}(X)''$,
namely the subset of elements satisfying
$y = u y u$. The inverse image under
the ternary $*$-morphism $\theta''$ of $J(u)$ is a weak*-closed
ternary $*$-ideal $I$ of $Z''$. Thus, $Z'' = I \oplus^{\infty} J$
for some ternary $*$-ideal $J$ (see e.g.\ \cite[Lemma 3.4 (1)]{BW}).
Now let $z$ be the tripotent associated with the natural cone
${\mathfrak e}$. Then $z = z_{1}+z_{2}$ for orthogonal tripotents
$z_{1}\in I$ and $z_{2}\in J$.
 Since $z$ is central and selfadjoint, it follows easily
that $z_{1}$ is central and selfadjoint.    Because $\theta''$ takes
open central tripotents to open central tripotents (by a variant of
\cite[Proposition 3.5]{BN}), and hence natural cones to natural
cones, $\theta''(z) \geq u$.  Since $z_{2}I=0$, it is also clear
that $\theta''(z_{2})$ is orthogonal to $u$.  Thus, $\theta''(z_{1})
\geq u$.  But $\theta''(z_{1}) \in J(u)$, and
so $\theta''(z_{1})= u
\theta''(z_{1})u = u$. It follows that $\theta''(J(z_{1})) \subset
J(u)$, where $J(z_{1})$ is the Pierce 2-space of $z$ in $Z''$.

    Recall that $\iota({\mathfrak c})$ lies in $J(z)_{+}$,
    the positive part of the Pierce 2-space of $z$.
      Thus, for
any $x \in \iota({\mathfrak c})$,  we have $x = z^2 x = z_{1}^2 x +
z_{2}^2 x$.  Since $x \in I$, we have $z_{2}^2 x$ = 0, so that $x =
z_{1}^2 x$. Hence, $\iota({\mathfrak c})$ lies in $J(z_{1})_{+}$,
which is a natural cone in $Z''$.
   It follows that $\iota({\mathfrak c})$ lies in the natural cone
${\mathfrak d}_{z_{1}}$ (see Lemma 4.15 (2) in \cite{BW}), which in
turn is contained in ${\mathfrak d}_z = {\mathfrak e}$. Hence
${\mathfrak e} = {\mathfrak d}_{z_{1}}$. Since $\theta''$ takes
squares of selfadjoint elements in $J(z_{1})$ to squares of
selfadjoints in $J(u)$, we have $\theta({\mathfrak e}) \subset
{\mathfrak d}$. Since $\theta$ takes natural cones to natural cones
(by a variant of \cite[Proposition 3.5]{BN}), by the definition of
${\mathfrak d}$ we must have
  $\theta({\mathfrak e}) = {\mathfrak d}$.  \end{proof}

The ordered noncommutative Shilov boundary is particularly nice in
the case that $X$ has a spanning cone: one may replace the cone
${\mathfrak e}$ defined in Theorem \ref{incot} by
 the entire positive cone of $Z$.

\begin{theorem}  \label{B}  Suppose that $X$ is an ordered operator space
with a densely spanning cone, and that $i : X \to B$ is a positive
complete isometry from $X$ into a C*-algebra. Then the TRO $A$
generated by $i(X)$ is a C*-subalgebra of $B$ (and equals the
C*-algebra generated by $i(X)$). Moreover, the ordered ternary
envelope of $X$ is a C*-algebra, which we write as $(C^*_e(X),j)$.
This C*-algebra $C^*_e(X)$ has the following universal property: for
$i : X \to A$ as above, there exists a (completely positive)
surjective $*$-homomorphism $\theta : A \to C^*_e(X)$ satifying
$\theta \circ i = j$.
\end{theorem}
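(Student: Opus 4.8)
The plan is to first establish the structural claim that the TRO $A$ generated by $i(X)$ is actually a C*-subalgebra. The key observation is that the spanning cone of $X$ consists of positive (hence selfadjoint) elements, so $i(X)$ is contained in the selfadjoint part of the C*-algebra generated by $i(X)$, and since the cone densely spans $X$, the positive elements $i(\mathfrak{c})$ densely span $i(X)$. The crucial point is that for positive operators $a, b, c$ in a C*-algebra, the ternary product $a b c$ need not be positive, but the TRO-closure will coincide with the *-algebra generated once we observe that products of the form $a b$ with $a, b$ positive selfadjoint already lie in the generated C*-algebra; more to the point, I would show that the TRO $A$ is closed under the ordinary C*-product. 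The standard way to see this: a TRO $A$ inside a C*-algebra $B$ is a C*-subalgebra precisely when it contains a positive element whose `support' or range projection acts as a unit for the relevant corner, or more directly, when $A A^* \subseteq A$ and $A^* A \subseteq A$. Since $i(X)$ is selfadjoint and densely spanned by positives, one checks that $A = A^*$, and then for the triple product $x y^* z$ with $x,y,z$ ranging over (limits of) positive elements of $i(X)$, one uses selfadjointness $y^* = y$ to rewrite ternary products as ordinary products lying back in $A$; I would verify $A$ is multiplicatively closed by approximating and using that squares and products of the spanning positive elements stay in the closed algebra generated by $i(X)$.

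Next, granting that $A$ is a C*-algebra, I would identify the ordered ternary envelope with the C*-envelope. Since $A$ is now a C*-algebra into which $X$ maps completely isometrically and completely positively, Theorem \ref{7tru} produces the surjective ternary $*$-morphism $\theta : A \to \mathcal{T}(X)$ with $\theta \circ i = j$. A surjective ternary morphism between TROs that are C*-algebras, and which comes from the Shilov boundary construction, is automatically a $*$-homomorphism (up to the standard identification, since a ternary morphism $T$ satisfies $T(xy^*z) = T(x)T(y)^*T(z)$, and when the domain is a C*-algebra generated by selfadjoint elements this forces multiplicativity on the generating set). Thus $\mathcal{T}(X)$ inherits a C*-algebra structure, which I would write as $C^*_e(X)$, and $\theta$ becomes a surjective $*$-homomorphism.

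The remaining task is to verify the \emph{ordered} universal property, namely that $\theta$ is completely positive for the canonical cones. Here I would invoke Theorem \ref{incot} directly: the intersection cone $\mathfrak{e}$ of all natural cones of $A$ containing $i(\mathfrak{c})$ is taken onto the defining cone $\mathfrak{d}$ of $\mathcal{T}^o(X)$ by $\theta$. The point special to the spanning-cone case is that when the cone of $X$ densely spans, the natural cone $\mathfrak{e}$ on the C*-algebra $A$ coincides with the full positive cone $A_+$ (this is the content of the sentence preceding the theorem, and follows from the fact that a natural cone containing a densely spanning set of positives must be all of $A_+$ for a C*-algebra). Since $\theta$ is a $*$-homomorphism it automatically maps $A_+ = \mathfrak{e}$ into $\mathcal{T}^o(X)_+$, and Theorem \ref{incot} gives equality with $\mathfrak{d}$, so $\theta$ is completely positive and surjective onto $C^*_e(X)$.

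I expect the main obstacle to be the very first step: proving rigorously that the TRO generated by a densely-spanning-positive-cone selfadjoint operator space is genuinely a C*-subalgebra rather than merely a $*$-TRO. The difficulty is that ternary products of positives are not positive and need not obviously re-enter the span; the resolution will rely on carefully exploiting both the selfadjointness of $i(X)$ and the density of the positive span to show closure under the ordinary product, perhaps by passing to the bidual and using the associated open central tripotent, which by the spanning hypothesis can be taken to act as a unit.
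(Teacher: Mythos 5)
Your proposal has genuine gaps at each of its three stages, the most serious being exactly the steps you leave as promissory notes. First, the claim that the TRO $A$ is a C*-subalgebra is never actually proved: "rewriting ternary products as ordinary products lying back in $A$" is backwards (ternary products lie in $A$ by definition of a TRO; the issue is why \emph{binary} products do), and "using that squares and products of the spanning positive elements stay in the closed algebra generated by $i(X)$" is circular, since membership in the generated C*-algebra is trivial while membership in the TRO is what must be shown. The idea you are missing is the paper's square-root trick: simple algebra shows $A^2$ and $A\cap A^2$ are C*-subalgebras of $B$; if $x\in A\cap B_+$ then $x^2\in (A^2)_+$, so $x=(x^2)^{1/2}\in A^2$ because square roots taken in $B$ stay in the C*-subalgebra $A^2$; hence $i(X_+)\subset A\cap A^2$, the dense spanning hypothesis gives $i(X)\subset A\cap A^2$, and since $A\cap A^2$ is a subTRO containing $i(X)$, minimality of $A$ forces $A\subset A^2$, making $A$ a C*-subalgebra. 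No passage to the bidual or open tripotents is needed.

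Second, your assertion that a ternary $*$-morphism out of a C*-algebra "generated by selfadjoint elements" is automatically a $*$-homomorphism is false: $T(x)=-x$ on $\Cdb$ (more generally $x\mapsto ux$ for a central unitary $u$) is a $*$-linear ternary morphism on a C*-algebra generated by the selfadjoint element $1$, yet it is not multiplicative. Selfadjointness of generators is not enough; \emph{positivity} is what rescues the argument, and this is precisely where the paper does its work: the ternary morphism $\pi : D \to C$ induces a $*$-homomorphism $\theta$ via $\theta(d^*b)=\pi(d)^*\pi(b)$, and for $x\in X_+$ the elements $\pi(\kappa(x))$ and $\theta(\kappa(x))$ are both positive with the same square, hence equal by uniqueness of positive square roots; since $\kappa(X_+)$ generates $D$ as a TRO, $\pi=\theta$. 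Without this step your universal property does not follow. Third, your identification $\mathfrak{e}=A_+$ is asserted rather than proved: the sentence preceding the theorem in the paper is a summary of what Theorem \ref{B} establishes, not a quotable lemma, and the "fact" that a natural cone on a C*-algebra containing a densely spanning set of positives must be all of $A_+$ itself requires an argument. The paper avoids this entirely: it shows via the $\pi=\theta$ argument above that any natural cone on $C^*_e(X)$ containing $j(X_+)$ arises from a ternary $*$-isomorphism that is forced to be a $*$-isomorphism, hence is the C*-cone, so the defining intersection $\mathfrak{d}$ collapses to $C^*_e(X)_+$; in particular its proof of Theorem \ref{B} never needs Theorem \ref{incot}.
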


\begin{proof}  This may be proved just as in
\cite[Corollary 5.4]{BN}.  We include a quick direct proof. Simple
algebra shows that $A^2$ and  $A \cap A^2$ are
 C*-subalgebras of $B$.
Also, if $x \in A \cap B_+$ then $x^2 \in (A^2)_+$, so that $x \in
A^2$ (since square roots in a C*-algebra remain in the algebra).
Thus $x \in A\cap A^2$. Hence $i(X_+) \subset A \cap A^2$, so that
$i(X) \subset A \cap A^2$.  This implies that $A \subset A \cap
A^2$, which forces $A$ to be a $C^*$-subalgebra of $B$.  The second
statement is now fairly obvious.

By the first lines of the proof of Theorem {\rm \ref{7tru}}, the
ternary envelope of $X$ may be chosen to be a quotient of $A$ by a
certain subspace.  This subspace is a two-sided ideal if $A$ is a
$C^*$-algebra, and so the quotient is a $C^*$-algebra which we write
as $C$ below. Moreover, the canonical map $j : X \to C^*_e(X)$ is
clearly also completely positive, being a composition of $i$ and the
quotient $*$-homomorphism.

Given another map $\kappa : X \to B'$ with the same properties as
$i$ above,  the TRO $D$ generated by $i(X)$ is a
C*-algebra,
 by the above. By the universal property of the ternary envelope,
 there
exists a ternary morphism $\pi : D \to C$ such that $\pi \circ
\kappa = j$. It is easy algebra to check that $\pi$ is $*$-linear.
Moreover, for example by the first part of the proof of
\cite[Corollary 8.3.5]{BLM}, the ternary morphism $\pi$ induces a
$*$-homomorphism $\theta : D^* D = A \to C^* C = C$ defined by
$\theta(d^* b) = \pi(d)^* \pi(b)$. If $x \in X_+$ then $\kappa(x)
\geq 0$, so that $\theta(\kappa(x)) \geq 0$. But also $j(x) =
\pi(\kappa(x)) \geq 0$.  Since $\theta(\kappa(x))^*
\theta(\kappa(x)) = \theta(\kappa(x)^* \kappa(x)) = \pi(\kappa(x))^*
\pi(\kappa(x))$, it follows from the uniqueness of square roots that
$\pi(\kappa(x)) = \theta(\kappa(x))$. Since $\kappa(X_+)$ generates
$D$ as a TRO, this forces $\pi = \theta$.

Finally, we show that there is only one natural cone
 in $C^*_e(X)$  containing  $X_+$.  Any such cone
may be viewed as the cone of a $*$-subTRO $W$ of a $C^*$-algebra,
for which there is a ternary $*$-isomorphism $\rho : C \to W$ with
$\kappa = \rho_{\vert X}$ positive. Applying the previous paragraph
to $\kappa$, easily shows that $W$ is a $C^*$-algebra and
$\rho^{-1}$ is a $*$-isomorphism, and hence an order isomorphism.
\end{proof}

Indeed most of the other results  in \cite[Section 5]{BN} carry over
 to our setting.

We call $C^*_e(X)$ in Theorem \ref{B}, the {\em C*-envelope} of $X$.
If $X$ is a unital operator system then this coincides with the
usual C*-envelope of Arveson and Hamana (see \cite{SOC,Ham1} or
\cite[Section 4.3]{BLM}).

 \medskip

 {\em Remarks.} 1) \ If $X$ is as in Theorem \ref{B}, then the
 universal
property there easily implies that there is a largest ordered
operator space cone on $X$ containing the given cone ${\mathfrak c}
= X_+$, namely $X \cap C^*_e(X)_+$. Thus the embedding of $X$ in
$C^*_e(X)$ is a complete order embedding iff there is no strictly
larger ordered operator space cone on $X$ containing ${\mathfrak
c}$.

2)  From the universal property it is easy to see that $C^*_e(X)$ is
also an ordered ternary envelope of $X$ in the sense of \cite{BN}.

\medskip

We say that $X$ in Theorem \ref{B} is {\em e.n.v.} if $C^*_e(X)$ is
unital. This property only depends on the operator space structure
of $X$ (not the order) \cite{BShi}. In \cite[Section 4]{BShi}, it is
shown that if $X$ is a `minimal operator space' (that is, $X$ is
completely isometric to a subspace of a $C(K)$ space), then $X$ is
e.n.v. iff $0$ is not a weak* limit of extreme points of ${\rm
Ball}(X')$. Typically, spaces will not be e.n.v..

 For the following result, we recall that the classical
Shilov boundary is usually shown to exist for function spaces which
contain constant functions. A space not containing constant
functions may not have a Shilov boundary in the usual sense.  We
prove that this boundary does exist in a case we have not seen
discussed in the classical literature:

\begin{corollary} \label{apsh}  Suppose that $X$ is a
closed selfadjoint subspace of $C_0(K)$, for a locally compact
Hausdorff $K$, and suppose that the cone $X \cap C_0(K)_+$ densely
spans $X$.
 Then the Shilov boundary of
$X$ exists as a topological space in the following sense: there
exists a locally compact topological space $\partial X$, and a
positive linear isometry $j : X \to C_0(\partial X)$ such that
$j(X)$ separates points of $\partial X$ and does not vanish
identically at any point, and such that for any other locally
compact topological space $\Omega$ and positive linear isometry $i :
X \to C_0(\Omega)$ such that $i(X)$ strongly separates points and
does not vanish identically at any point,
 there
exists a homeomorphism $\tau$ from $\partial X$ onto a subset of
$\Omega$, such that $i(x) \circ \tau = j(x)$ for all $x \in X$.
Also, $\partial X$ is compact iff  $X$ is e.n.v..
\end{corollary}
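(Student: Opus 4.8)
The plan is to deduce Corollary~\ref{apsh} as the commutative specialization of Theorem~\ref{B}, by passing back and forth between closed selfadjoint subspaces of commutative C*-algebras and locally compact Hausdorff spaces via the Gelfand correspondence. First I would observe that $C_0(K)$ is an abelian C*-algebra, and the inclusion $i : X \to C_0(K)$ is a positive complete isometry (complete isometry being automatic for maps into commutative C*-algebras, by minimality), so Theorem~\ref{B} applies directly. It produces the C*-envelope $C^*_e(X)$ together with the canonical positive complete isometry $j : X \to C^*_e(X)$. The key step is that $C^*_e(X)$, being generated by the commuting selfadjoint operators $j(X)$, is itself a commutative C*-algebra, hence of the form $C_0(\partial X)$ for some locally compact Hausdorff space $\partial X$ by the Gelfand–Naimark theorem. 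I would define $\partial X$ to be the maximal ideal (Gelfand) spectrum of $C^*_e(X)$; this is the space whose existence the corollary asserts.

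Next I would verify the separation and non-vanishing properties of $j$. Since $j(X)$ generates $C^*_e(X) = C_0(\partial X)$ as a C*-algebra, the Stone–Weierstrass theorem forces $j(X)$ to separate points of $\partial X$ and to vanish nowhere identically on $\partial X$; otherwise the generated C*-subalgebra would be a proper ideal of $C_0(\partial X)$, contradicting that $j(X)$ generates all of it. Then, for the universal property, I would take any competing positive linear isometry $i : X \to C_0(\Omega)$ with $i(X)$ strongly separating points and vanishing nowhere. The TRO generated by $i(X)$ inside $C_0(\Omega)$ is, by Theorem~\ref{B}, a commutative C*-subalgebra $A$, say $A = C_0(\Omega_0)$ for a locally compact $\Omega_0$; the strong-separation and non-vanishing hypotheses are exactly what guarantee, again via Stone–Weierstrass, that $A$ separates points and so $\Omega_0$ may be realized as (an open subset corresponding to a closed subset of) $\Omega$, or more precisely that the restriction map identifies $A$ with $C_0$ of a locally compact subset of $\Omega$. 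Theorem~\ref{B} then furnishes a surjective $*$-homomorphism $\theta : A \to C^*_e(X)$ with $\theta \circ i = j$, and dualizing $\theta$ via Gelfand duality yields a proper continuous map, in fact a homeomorphism $\tau$ of $\partial X$ onto a subset of $\Omega_0 \subset \Omega$, satisfying $i(x) \circ \tau = j(x)$ for all $x \in X$.

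Finally, the compactness assertion is the translation of the e.n.v.\ condition under Gelfand duality: $C^*_e(X)$ is unital precisely when its spectrum $\partial X$ is compact, which is exactly the definition of $X$ being e.n.v.\ applied in this setting. I expect the main obstacle to be the bookkeeping in the universal-property step, namely pinning down precisely how the surjective $*$-homomorphism $\theta$ dualizes to an \emph{injective} (indeed homeomorphic-onto-its-image) map $\tau$ on the locally compact spectra, and reconciling the two slightly different separation hypotheses (``separates'' for $j$ versus ``strongly separates'' for $i$) with the non-unital subtleties that Stone–Weierstrass and Gelfand duality acquire for $C_0$ rather than $C(K)$. In particular one must be careful that $\theta$ surjective corresponds to $\tau$ being a homeomorphism onto a \emph{closed} subset of the spectrum of $A$, and that this subset sits inside $\Omega$ correctly; handling points where functions vanish, and ensuring properness of the induced maps, is where the care is needed. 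The remaining verifications are routine given Theorem~\ref{B} and the standard commutative Gelfand machinery.
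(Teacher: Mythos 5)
Your proposal is correct and follows essentially the same route as the paper: apply Theorem \ref{B} to see that $C^*_e(X)$ is a quotient of the commutative C*-algebra generated by $X$ in $C_0(K)$, hence equals some $C_0(\partial X)$ (compact iff $X$ is e.n.v.), and then obtain $\tau$ by Gelfand-dualizing the surjective $*$-homomorphism supplied by the universal property of Theorem \ref{B}. The only streamlining in the paper's version is that Stone--Weierstrass gives outright that the C*-algebra generated by $i(X)$ is all of $C_0(\Omega)$ --- that is, $(C_0(\Omega),i)$ is a ternary $*$-extension of $X$ --- so your intermediate space $\Omega_0$ is just $\Omega$, and the surjection dualizes directly to a homeomorphism of $\partial X$ onto a closed subset of $\Omega$.
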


\begin{proof}     The TRO inside $C_0(K)$ generated
by $X$ is a C*-algebra $A$ by Theorem \ref{B}, and it is
commutative. By the universal property in Theorem \ref{B}, there is
a $*$-homomorphism from $A$ onto  $C^*_e(X)$.  Thus $C^*_e(X)$ is a
 commutative
C*-algebra, so that $C^*_e(X) = C_0(\partial X)$ for a locally
compact topological space $\partial X$, which will
be compact if $X$ is e.n.v..  Clearly the copy of $X$
separates points of $\partial X$ and does not vanish identically at
any point.   Given any $(\Omega,i)$ as stated,
 then
$(C_0(\Omega),i)$ is a ternary $*$-extension of $X$, so that by
Theorem \ref{B} there is a surjective $*$-homomorphism $\pi :
C_0(\Omega) \to C_0(\partial X)$ as in that Corollary, with $\pi
\circ i = j$. By the well known dualities between categories of
topological spaces and algebras of continuous functions, $\pi$
induces a homeomorphic embedding $\tau :
\partial X \to \Omega$ such that $i(x) \circ \tau = j(x)$.
\end{proof}

The C*-envelope in Theorem \ref{B}, and the Shilov boundary in the
last result, will have (ordered versions of) almost all of the usual
familiar properties of the noncommutative and classical Shilov
boundaries (see e.g.\ \cite{BLM,Ham} for some of these).   Thus it
should be a useful tool.  For example, it gives one a good handle on
the completely positive surjective complete isometries $T : X \to Y$
between ordered operator spaces which are densely spanned by their
positive cones. Indeed, such maps extend uniquely to
$*$-isomorphisms between the $C^*$-envelopes, where in certain cases
(such as the finite dimensional case considered at the end of the
paper) they can be classified. To see this, note that the universal
property in the Theorem \ref{B}, applied to $j' \circ T$, gives a
$*$-homomorphism $\pi : C^*_e(X) \to C^*_e(Y)$ such that $\pi \circ
j = j' \circ T$, where $j' : Y \to C^*_e(Y)$ is the canonical map.
On the other hand, by basic properties of the noncommutative Shilov
boundary of any operator space, there is a ternary isomorphism
$\theta : C^*_e(X) \to C^*_e(Y)$ such that $\pi \circ j = j' \circ
T$.   So $\pi = \theta$ on $j(X)$, and hence on all of $C^*_e(X)$.
So $\pi$ is bijective, and so is a $*$-isomorphism, and the
uniqueness is obvious.

\section{Unitizations}  A natural way to
attempt to understand nonunital ordered operator spaces is to
`unitize' them; that is to embed them as a codimension one subspace
of a unital operator system (in the usual sense of e.g.\ \cite{CE}).
This was first done in \cite{OSWU}. That paper assigns, to any
`matrix ordered operator space' $X$, a unitization $X^+$, which is
an operator system.  Note $X^+$ is spanned by $X$ and $1$, and the
embedding  $i : X \to X^+$ is a completely contractive complete
order embedding.   One cannot hope that $i$ be isometric too, in
general.   To see this, suppose that $i$ was isometric in the  case
that $X = A'$ is the dual of a nontrivial C*-algebra $A$, with its
usual cone (which is a `matrix ordered operator space'). Since $X^+$
is an operator system, and hence may be viewed as a subspace of
$B(H)$ containing $I_H$, we  have contradicted Proposition
\ref{one}.  (We remark in passing that the same argument shows that
the main result in \cite{K1} is not correct as stated; it has been
corrected in \cite{K2}.)  It is easy to see from e.g.\
\cite[Corollary 4.11]{OSWU}, that $i$ is a complete isometry iff $X$
is a completely isometrically complete order embeddable in a
C*-algebra.  We mention, in passing, that such spaces were
characterized in \cite{OSWU}, as the spaces whose `extended
numerical radius' norms coincide with the given matrix norms.

It is shown in \cite[Lemma 4.9 (c)]{OSWU} that $X^+$ has the
following universal property:  for any completely contractive
completely positive map $T$ from $X$ into a (unital) operator system
$Y$, the extension $x + \lambda 1 \mapsto T(x) + \lambda 1$, from
$X^+$ into $Y$ is completely positive.  Since the latter map is
unital, it is also completely contractive. From this, it follows
that $X^+$  possesses the smallest cone a unitization of $X$ may
have. To see this, take  $Y$ in the universal property above to be
any other unitization of $X$.

In contrast, it may well be useful sometimes to have a unitization
with a `biggest' positive cone.  We will show that such a `big
unitization' exists if $X_+$ densely spans $X$. In fact, after
playing with examples, one sees that sometimes it is easier to
describe this `big unitization' explicitly than $X^+$ (see e.g.\ the
example at the end of the paper, or Corollary \ref{notenv}).
 For ordered operator spaces, we will define a unitization
 $X^1$ via the
results in Section 2. Namely, if $X$ is an ordered operator space,
let $W = {\mathcal T}^o(X)$ be its ordered ternary envelope. Since
the positive cone of this is `natural' in the sense of \cite{BW}, by
the results of that paper we may consider $W$ as a $*$-TRO inside a
C*-algebra, with the inherited cone. Then $B = W + W^2$ is a
C*-algebra (see \cite{BW}), and we set $X^1$ be the span of the
image of  $X$ and the identity of the C*-algebra unitization $B^1$
of $B$. There is a choice to make here if $B$ is already unital: in
this paper we take $B^1 = B$ in this case.  Note that this choice
has the consequence that $X^1 = X$ if $X$ is already a unital
operator system.

\begin{theorem}  \label{upuni}
Suppose that $X$ is an ordered operator space with a densely
spanning cone (resp.\ $X$ is an ordered operator space), that $H$ is
a Hilbert space, and that $i : X \to B(H)$ is a $*$-linear complete
isometry which is completely positive (resp.\ completely positive
and  such that there is no smaller natural cone than $A \cap
B(H)_{+}$ in the TRO $A$ generated by $i(X)$ containing the image of
the cone of $X$). Then there is a completely positive unital map
from $i(X) + \Cdb I_H   \to   X^1$ extending the canonical map $i(X)
\to X$.
\end{theorem}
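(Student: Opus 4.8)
The plan is to realize $\Phi$ as the restriction of a unital $*$-homomorphism between the unitizations of the two generated $C^*$-algebras, so that positivity and unitality come for free. Write $A$ for the $*$-TRO generated by $i(X)$ in $B(H)$, and let $C$ be the $C^*$-algebra it generates, so $C=A+A^2$ in general and $C=A$ in the densely spanning case (by Theorem \ref{B}). Realize $W=\mathcal{T}^o(X)$, with its cone $\mathfrak d$, as a $*$-TRO inside $B=W+W^2$ with $\mathfrak d = W\cap B_+$, exactly as in the construction of $X^1$. By Theorem \ref{7tru} there is a surjective ternary $*$-morphism $\theta:A\to W$ with $\theta\circ i=j$, and by Theorem \ref{incot} together with the hypothesis (which reads $\mathfrak e = A\cap B(H)_+$) we get $\theta(A\cap B(H)_+)=\theta(\mathfrak e)=\mathfrak d$. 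The candidate map is $\Phi(i(x)+\lambda I_H)=j(x)+\lambda 1_{B^1}$: its range lies in $\mathrm{span}(j(X),1_{B^1})=X^1$, it restricts to the canonical map on $i(X)$, and it is unital.

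In the densely spanning case this is immediate. By Theorem \ref{B} the map $\theta$ is already a surjective $*$-homomorphism of $C^*$-algebras, hence it extends to the unital $*$-homomorphism $\Psi:C+\Cdb I_H\to B^1$, $c+\lambda I_H\mapsto \theta(c)+\lambda 1_{B^1}$ (taking $\Psi=\theta$ when $I_H\in C$, noting that then $B$ is unital and $\theta(I_H)=1_B=1_{B^1}$ by our convention). Since $i(X)+\Cdb I_H\subseteq C+\Cdb I_H$, the restriction $\Phi=\Psi|_{i(X)+\Cdb I_H}$ is the restriction of a unital $*$-homomorphism, hence unital and completely positive, with the stated values.

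In the general case the work is to promote $\theta$ to a $*$-homomorphism $\Theta:C\to B$ extending it, via $\Theta(a+\sum_k b_kc_k)=\theta(a)+\sum_k\theta(b_k)\theta(c_k)$; I expect this well-definedness to be the main obstacle. The issue is purely one of consistency on $A\cap A^2$: when an element of $A$ coincides in $B(H)$ with an element of $A^2$, the two prescriptions for $\Theta$ must agree in $B$, and this is precisely where $\mathfrak e = A\cap B(H)_+$ is needed. Passing to second duals, let $z\in A''$ be the central selfadjoint tripotent associated (via \cite[Theorem 4.16]{BW}) with $\mathfrak e=A\cap B(H)_+$ and $u\in W''$ that of $\mathfrak d$; the tripotent analysis in the proof of Theorem \ref{incot} shows how $z$ sits over $u$ under $\theta''$, and the present hypothesis forces the component of $z$ transverse to $u$ to be annihilated, so that $\theta$ carries the local unit governing products in $C$ to the one governing products in $B$. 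This is exactly the compatibility making $\Theta$ a well-defined, and then automatically multiplicative and $*$-preserving, surjection onto $B$. That the hypothesis is essential is already visible for $X=\Cdb$ with the zero cone, realized by an $i$ with $\mathfrak e\subsetneq A\cap B(H)_+$: there $\theta$ sends the unit to a selfadjoint unitary that is not a projection, the two values of $\Theta$ on it disagree, and no such $\Phi$ exists.

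Granting $\Theta$, the proof finishes as in the first case: extend $\Theta$ to the unital $*$-homomorphism $\Psi:C+\Cdb I_H\to B^1$ (with a small separate check when $I_H\in C$, using the convention $B^1=B$ for unital $B$ and the compatibility just established), and set $\Phi=\Psi|_{i(X)+\Cdb I_H}$. Then $\Phi(i(x))=\theta(i(x))=j(x)$ and $\Phi(I_H)=1_{B^1}$, so $\Phi$ extends the canonical map and is unital, its range lies in $X^1$, and $\Phi$ is completely positive because it is the restriction of a unital $*$-homomorphism while the cones on $i(X)+\Cdb I_H$ and on $X^1$ are those inherited from $B(H)\supseteq C+\Cdb I_H$ and from $B^1$ respectively; the matrix levels are handled identically.
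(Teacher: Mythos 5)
Your first (densely spanning) case is correct and is exactly the paper's argument: by Theorem \ref{B} the canonical $\theta$ is a surjective $*$-homomorphism of C*-algebras, it extends to a unital $*$-homomorphism between unitizations, and one restricts to ${\rm span}\{i(X),I_H\}$.

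The problem is the general (``respectively'') case, and it sits precisely at the step you yourself flag as ``the main obstacle'' and then do not prove. Everything hinges on the consistency claim: that $\theta$ agrees on $A\cap A^2$ with the map $\sum_k b_kc_k\mapsto\sum_k\theta(b_k)\theta(c_k)$ induced on $A^2$, equivalently that $\Theta$ is well defined (as your own example $\theta(x)=-x$ on $\Cdb$ shows, this genuinely fails for ternary $*$-morphisms without a positivity hypothesis). Your justification --- that ``the tripotent analysis in the proof of Theorem \ref{incot} shows how $z$ sits over $u$'' and that ``the present hypothesis forces the component of $z$ transverse to $u$ to be annihilated, so that $\theta$ carries the local unit governing products in $C$ to the one governing products in $B$'' --- is a plausibility sketch, not an argument: you never identify this ``local unit,'' never prove that $\theta''$ maps it where you need it, and never deduce from that the required identity $\theta(a)=\sum_k\theta(b_k)\theta(c_k)$ whenever $a=\sum_k b_kc_k\in A\cap A^2$. (The $X=\Cdb$, zero-cone example shows the hypothesis is necessary, but it is no substitute for the positive direction.) The paper avoids this work entirely: it notes that by Theorem \ref{incot} the ternary $*$-morphism $\theta$ takes ${\mathfrak e}$ onto ${\mathfrak d}$, hence is positive as a map into $B=W+W^2$, and then invokes \cite[Corollary 4.3]{BW}, a ready-made result asserting that such a positive ternary $*$-morphism extends to a completely positive unital $*$-homomorphism from a unitization of $Z+Z^2$ into the unitization of $W+W^2$; the desired map is then just the restriction to ${\rm span}\{i(X),I_H\}$. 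In effect, what you set out to build by hand is the content of that cited corollary, and until you supply an actual proof of the consistency on $A\cap A^2$ --- which is where all the work lies --- your second case is incomplete.
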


\begin{proof}   Let
$\theta$ be the canonical ternary $*$-morphism from the TRO $A$
generated by $i(X)$ to ${\mathcal T}^o(X)$ (see Theorem  \ref{incot}
and Theorem  \ref{B}). If $X$  has a densely spanning cone, then by
Theorem \ref{B}, $A$ is a C*-algebra,  and  $\theta$ is a
$*$-homomorphism, and hence extends to a $*$-homomorphism $\pi$ from
a unitization  of $A$, which we may take to be the span of $A$ and
$I_H$, into a unitization of the C*-algebra  ${\mathcal T}^o(X)$. We
then restrict $\pi$ to the span of $i(X)$ and $I_H$.

In the `respectively case', $\theta$ is positive by Theorem
\ref{incot}, and hence is positive as a map into the C*-algebra $B =
W + W^2$ discussed above.    By \cite[Corollary 4.3]{BW} we can
extend $\theta$ to a completely positive unital $*$-homomorphism
from a unitization  of $Z + Z^2$ into the unitization of $W + W^2$.
Here $Z$ is the TRO generated by $i(X)$. Finally, restrict to the
span of $i(X)$ and $I$ as before.
 \end{proof}

\begin{corollary} \label{ap}  If $X$ is an ordered
operator space with a densely spanning cone, then $C^*_e(X^1) =
C^*_e(X)^1$. \end{corollary}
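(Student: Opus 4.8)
The plan is to identify $C^*_e(X)^1$ as the C*-envelope of the unital operator system $X^1$ by exhibiting mutually inverse $*$-homomorphisms that fix $X^1$. First I would record the ambient picture. Since $X$ has a densely spanning cone, Theorem \ref{B} shows that $\mathcal{T}^o(X) = C^*_e(X)$ is already a C*-algebra, so in the construction of $X^1$ we have $W = C^*_e(X)$, $W^2 = W$, and $B = W + W^2 = C^*_e(X)$; thus $X^1 = \mathrm{span}(X,1)$ literally sits inside $C^*_e(X)^1 =: C$ as a unital selfadjoint subspace. Because $1 \geq 0$ and $X_+$ densely spans $X$, the cone $X^1 \cap C_+$ densely spans $X^1$, so $X^1$ is itself an ordered operator space with densely spanning cone, and Theorem \ref{B} applies to it; in particular $C^*_e(X^1)$ is a C*-algebra.

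For the first $*$-homomorphism I would observe that the inclusion $X^1 \hookrightarrow C$ is a unital positive complete isometry and that $X^1$ generates $C$ as a C*-algebra (since $X$ generates $C^*_e(X)$ by Theorem \ref{B}, whence $X^1 = X + \Cdb 1$ generates $C^*_e(X) + \Cdb 1 = C$). Applying the universal property of $C^*_e(X^1)$ in Theorem \ref{B} to this embedding then yields a surjective $*$-homomorphism $\Phi : C \to C^*_e(X^1)$ whose restriction to $X^1$ is the canonical embedding $X^1 \to C^*_e(X^1)$.

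For the reverse direction I would restrict the canonical embedding $j^1 : X^1 \to C^*_e(X^1)$ to $X$, obtaining a positive complete isometry of $X$. By Theorem \ref{B} the C*-subalgebra $D \subseteq C^*_e(X^1)$ generated by $j^1(X)$ admits a surjective $*$-homomorphism $\psi : D \to C^*_e(X)$ fixing $X$. The key bookkeeping step is that $C^*_e(X^1) = D + \Cdb 1$: indeed $X^1 = X + \Cdb 1$ generates $C^*_e(X^1)$, and $D + \Cdb 1$ is already a closed $*$-subalgebra containing $X^1$. Hence $C^*_e(X^1)$ is (a copy of) the unitization $D^1$, and I can extend $\psi$ unitally to $\psi^1 : C^*_e(X^1) \to C$, a $*$-homomorphism fixing both $X$ and $1$, hence all of $X^1$.

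Finally, both composites $\Phi \circ \psi^1$ and $\psi^1 \circ \Phi$ are $*$-endomorphisms fixing $X^1$ pointwise; since $X^1$ generates each of $C^*_e(X^1)$ and $C$ as a C*-algebra, both composites are the identity, so $\Phi$ is the desired $*$-isomorphism $C^*_e(X)^1 \cong C^*_e(X^1)$ fixing $X^1$. The one genuine obstacle beyond this routine assembly of universal properties is matching the unitization conventions: one must confirm the precise equality $C^*_e(X^1) = D^1$ rather than merely that $C^*_e(X^1)$ is generated by $D$ and $1$, and must separately handle the \emph{e.n.v.} case in which $C^*_e(X)$ is already unital, where the paper's convention $B^1 = B$ forces $D$ to be unital and forces the identities of $C^*_e(X^1)$ and $C^*_e(X)$ to coincide.
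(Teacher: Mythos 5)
Your proof is correct in substance, but it takes a genuinely different route from the paper's. The paper never constructs a map back out of $C^*_e(X^1)$: it works entirely inside $C := C^*_e(X)^1$ and invokes the Arveson--Hamana characterization of the C*-envelope of the unital operator system $X^1$ via boundary ideals, i.e.\ it suffices to show that any closed ideal $J$ of $C$ for which $X^1 \to C/J$ is completely isometric must be zero. This is done by intersecting: $I = J \cap C^*_e(X)$ is then an ideal of $C^*_e(X)$ for which $X \to C^*_e(X)/I$ is completely isometric, hence $I = 0$ by the corresponding property of the ternary envelope of $X$, and then $J = 0$ because $C^*_e(X)$ is an essential ideal in its unitization (the unital case being dismissed as easier). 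You instead apply the ordered universal property of Theorem \ref{B} twice --- to the inclusion $X^1 \hookrightarrow C$ and to $j^1|_X$ --- and assemble mutually inverse $*$-homomorphisms. The paper's route is shorter, needs no case analysis on units, and uses only unordered operator-space facts (ternary envelope rigidity plus essentiality of a nonunital C*-algebra in its unitization); yours is more constructive, stays entirely within the paper's own Theorem \ref{B} (in particular it never needs the external boundary-ideal machinery), and exhibits the isomorphism explicitly, at the price of the unitization bookkeeping you flag.

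Two of the points you flag should be pinned down, and both can be. First, to know that $j^1(X^1) \subset D + \Cdb 1$ (indeed, to know that $C^*_e(X^1)$ is unital and what its unit is) you need $j^1(1_{X^1}) = 1_{C^*_e(X^1)}$; this comes for free from your first step, since $\Phi$ is a surjective $*$-homomorphism from the unital algebra $C$, so its image $C^*_e(X^1)$ is unital with unit $\Phi(1_C) = j^1(1_{X^1})$. Second, your assertion that in the e.n.v.\ case the convention $B^1 = B$ ``forces $D$ to be unital'' is not justified a priori (a nonunital C*-algebra can map onto a unital one), but you do not need it: the correct dichotomy is whether $1_{C^*_e(X^1)} \in D$. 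If yes, then $D = C^*_e(X^1)$ and the surjection $\psi$ carries this unit to a unit of $C^*_e(X)$, so $X$ is e.n.v., $C = C^*_e(X)$ by the convention, and $\psi^1 = \psi$ is already unital into $C$; if no, then $D \cap \Cdb 1 = 0$ and $\psi^1(d + \lambda 1) = \psi(d) + \lambda 1_C$ is a well-defined unital $*$-homomorphism into $C$, whether or not $X$ is e.n.v. With these two repairs your composite argument closes exactly as you describe.
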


\begin{proof}
 We assume that $C^*_e(X)$ is nonunital (the other case is
similar but easier).  By a basic property of the C*-envelope (see
e.g.\ \cite{Ham1,BLM}), it suffices to show that $J = 0$ if $J$ is
an ideal in $C^*_e(X)^1$ such that the canonical map $d : X^1 \to
C^*_e(X)^1/J$ is a complete isometry. Let $I = J \cap C^*_e(X)$,
then the canonical map $C^*_e(X) \to C^*_e(X)^1/J$ factoring through
$C^*_e(X)^1$, has kernel $I$. Hence $d$ restricted to $X$ may be
viewed as a complete isometry into $C^*_e(X)/I$. By a basic property
of the ternary envelope (see \cite{Ham} or \cite[8.3.10]{BLM}), we
have $I = 0$. Since $C^*_e(X)$ is an essential ideal in
$C^*_e(X)^1$, we deduce that $J = 0$. Thus $C^*_e(X^1) =
C^*_e(X)^1$.
\end{proof}

In this paragraph, let $X$ be an `$L^\infty$-matricial Riesz normed
space' in the sense of \cite{Sch,K1,K2}.   This is a subclass of the
`matrix ordered operator spaces' from \cite{OSWU} (to see that $X$
satisfies the first condition in Definition 3.3 of \cite{OSWU}, use
the formula in \cite[Definition 2.3]{K1} and pre- and post-multiply
the matrix there by the permutation matrix $U$ that switches around
the four corners.  The second condition is explicitly noted in
\cite{K1}).
 In \cite{K1,K2} a unitization for
 such spaces $X$ is introduced, which we shall write as
 $\tilde{X}$.    We show next that this unitization coincides with
the one in \cite{OSWU}. Thus one can apply results in \cite{K1,K2}
to the setting of \cite{OSWU}, and vice versa. For example, it
follows from \cite{K2} that there is a nice `internal'
 description
the cone on $X^+$ in the case that $X$ is `$L^\infty$-matricial
Riesz normed': namely, if $v \in M_n(X)$ and $A \in M_n$ are
selfadjoint, then $(v,A) = [v_{ij} + a_{ij} 1]$  is in the
positive
cone of $M_n(X^+)$ iff $A \geq 0$ and for any  $\epsilon
>0$ there is an $u\in M_n(X)_+$ with $\|u\|<1$ so that
$v+(A+ \epsilon)^{1/2}u(A+\epsilon)^{1/2}\geq 0$ (see \cite{K1}). 
 The first author
thanks A. K. Karn for sending him a version of \cite{K2}, which
contains another variant of this formula that is not hard to see is
equivalent to the one above (to see the nontrivial direction,
note that if $\theta$ and $u$ are as in the definition of the cone
in \cite{K2}, put
$$
\hat{u}= (A + \epsilon
1)^{-1/2}(A+\theta)^{1/2}u(A+\theta)^{1/2}(A+\epsilon
 1)^{-1/2}
$$
Since $A+\theta\leq A + \epsilon 1$ we have
$$
\|(A + \epsilon 1)^{-1/2}(A+\theta)^{1/2}\|^2=\|(A+ \epsilon
1)^{-1/2}(A+\theta)(A + \epsilon 1)^{-1/2}\| \leq 1 .
$$
Thus $\|\hat{u}\|<1$, and  $v + (A + \epsilon 1)^{1/2}
\hat{u} (A + \epsilon 1)^{1/2} = v+
(A+\theta)^{1/2}u(A+\theta)^{1/2} \geq 0$.)

\begin{proposition}
Whenever $X$ is as in the last paragraph, then $X^+= \tilde{X}$,
completely positively and completely isometrically via the identity
map.
\end{proposition}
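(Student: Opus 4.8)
The plan is to reduce everything to an equality of matrix cones. Both $X^+$ and $\tilde{X}$ are, as vector spaces, the unitization $X \oplus \Cdb 1$, and in each case $X$ is embedded as the codimension-one subspace, with the unit adjoined. The map in the statement is the identity of $X \oplus \Cdb 1$, which is unital and restricts to the identity on $X$; so it will be a unital complete order isomorphism precisely when $M_n(X^+)_+ = M_n(\tilde{X})_+$ for every $n$. Here $M_n(\tilde{X})_+$ is given by Karn's explicit formula from \cite{K1,K2}, which by the $\hat{u}$--computation in the preceding paragraph we may take in the convenient $\epsilon 1$ form. Once the cones are shown to agree, complete isometry will be automatic, since a unital completely positive map is completely contractive and the inverse here is again unital and completely positive. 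Thus the entire content is the two cone inclusions.

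For the inclusion $M_n(X^+)_+ \subseteq M_n(\tilde{X})_+$, I would invoke the universal property of $X^+$ recorded in \cite[Lemma 4.9 (c)]{OSWU}. Since $\tilde{X}$ is a unital operator system and the canonical embedding $X \to \tilde{X}$ is a completely contractive completely positive map (because $\tilde{X}$ is a unitization of $X$), the extension $x + \lambda 1 \mapsto x + \lambda 1$ from $X^+$ to $\tilde{X}$ is completely positive. As this extension is literally the identity on the common underlying space, its complete positivity is exactly the asserted cone inclusion. This is precisely the manifestation of the fact, noted in the text, that $X^+$ carries the smallest cone of any unitization of $X$.

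The substantive step is the reverse inclusion $M_n(\tilde{X})_+ \subseteq M_n(X^+)_+$, which I would establish by a direct computation inside a concrete realization. Represent the operator system $X^+$ unitally and completely order isomorphically as a selfadjoint unital subspace of some $B(H)$, so that $1 = I_H$, the embedding $X \subseteq X^+ \subseteq B(H)$ is a completely isometric complete order embedding, and $M_n(X^+)_+ = M_n(X^+) \cap M_n(B(H))_+$. Take an element $(v,A) = [v_{ij} + a_{ij} 1]$ of $M_n(\tilde{X})_+$, written in $M_n(B(H))$ as $v + A \otimes I_H$ with $v = [v_{ij}] \in M_n(X)$; by Karn's formula $A \geq 0$, and for each $\epsilon > 0$ there is $u \in M_n(X)_+$ with $\|u\| < 1$ and $v + (A+\epsilon)^{1/2} u (A+\epsilon)^{1/2} \geq 0$. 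Transporting $u$ into $M_n(B(H))$, positivity and the norm bound give $0 \leq u \leq I$, so conjugating by the positive scalar matrix $(A+\epsilon)^{1/2}$ yields $(A+\epsilon)^{1/2} u (A+\epsilon)^{1/2} \leq (A+\epsilon) \otimes I_H$. Hence $v + (A+\epsilon) \otimes I_H \geq v + (A+\epsilon)^{1/2} u (A+\epsilon)^{1/2} \geq 0$ for every $\epsilon > 0$, and letting $\epsilon \to 0^+$ gives $v + A \otimes I_H \geq 0$ in $M_n(B(H))$. Therefore $(v,A) \in M_n(X^+) \cap M_n(B(H))_+ = M_n(X^+)_+$.

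With the two inclusions in hand the matrix cones coincide, the identity map is a unital completely positive bijection with unital completely positive inverse, and complete isometry follows as above. The only real obstacle is the reverse inclusion, and there the work is bookkeeping rather than depth: one must be careful that the witnessing elements $u$ and the inequality for $v + (A+\epsilon)^{1/2} u (A+\epsilon)^{1/2}$ transfer correctly from the abstract cone $M_n(X)_+$ and abstract norm into $M_n(B(H))_+$ under the complete order embedding, and that conjugation by the scalar matrix $(A+\epsilon)^{1/2}$ respects the operator order. Once this setup is in place, the passage $\epsilon \to 0^+$ closes the argument.
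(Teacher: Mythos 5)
Your proof is correct, and its first half coincides with the paper's: both obtain the inclusion $M_n(X^+)_+ \subseteq M_n(\tilde{X})_+$ by applying the universal property of $X^+$ from \cite[Lemma 4.9 (c)]{OSWU} to the canonical inclusion $X \to \tilde{X}$. For the reverse (and substantive) inclusion you take a genuinely different route. The paper never leaves the abstract setting: given Karn's witness $u$ with $v+(A+\epsilon)^{1/2}u(A+\epsilon)^{1/2}\geq 0$, it conjugates by the scalar matrix $(A+\epsilon)^{-1/2}$ and applies an arbitrary positive contractive $\phi\in M_n(X)'$ to get $\phi\bigl((A+\epsilon)^{-1/2}v(A+\epsilon)^{-1/2}\bigr)\geq \phi(-u)\geq -1$, which is verbatim the definition in \cite{OSWU} of the matrix cones of $X^+$; no representation theorem and no limit in $\epsilon$ are needed. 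You instead realize $X^+$ concretely via Choi--Effros, push Karn's data into $M_n(B(H))$, use $0\leq u\leq I$ to obtain $v+(A+\epsilon)\otimes I_H\geq 0$, and let $\epsilon\to 0^+$. Your route buys independence from the precise form of OSWU's definition of the cone of $X^+$ --- you use only that $X^+$ is an operator system into which $X$ embeds completely contractively and completely order isomorphically --- at the cost of invoking the abstract representation of operator systems and a limiting argument; the paper's verification is shorter and purely intrinsic (dual, via functionals). One caveat: you assert that the embedding $X\subseteq X^+\subseteq B(H)$ is completely isometric, which is not available at that stage (for general matrix ordered operator spaces it fails, as the paper notes; for this class it is most naturally deduced from the proposition itself or from \cite[Corollary 4.11]{OSWU}). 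Fortunately your argument uses only complete contractivity (to get $\|u\|<1$ in $M_n(B(H))$) and complete positivity of the embedding, so nothing breaks.
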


\begin{proof}  Consider the inclusion $X \to \tilde{X}$.
By the property of $X^+$ mentioned at the start of this section,
this extends to a completely positive unital map $X^+ \to \tilde{X}$.
It therefore remains to show that every positive element $(v,A)$ in
$M_n(\tilde{X})$ is positive in $M_n(X^+)$. Let $\epsilon
>0$ be given, and let $u$ be as above, with
$v+(A+ \epsilon)^{1/2}u(A+\epsilon)^{1/2}\geq 0$. Then, for any
positive contractive $\phi\in M_n(X)'$,
$$
\phi\left((A+\epsilon)^{-1/2}v(A+\epsilon)^{-1/2}\right)\geq
\phi(-u)\geq -1.
$$
This is precisely the definition from \cite{OSWU} of the matrix
cones of $X^+$.  That is, by definition, $(v,A)\geq 0$ in
$M_n(X^+)$.
\end{proof}

As we see next, there seem to be interesting classes of ordered
operator spaces $X$ for which $X^+ = X^1$. Note that the latter
condition implies, first, that $X  \subset X^+$ completely
isometrically.  A unitization $\tilde{X}$ of $X$ will be called a
{\em completely isometric unitization} if the canonical map $X \to
\tilde{X}$ is completely isometric. Second, if $X^+ = X^1$ and
 if
$X$ is densely spanned by $X_+$, then all completely isometric
unitizations of $X$ coincide. This is because $X^+$ is the smallest
unitization, and Theorem \ref{upuni} implies that $X^1$ has the
biggest positive cone of any completely isometric unitization.

For example, we now study ordered operator spaces $X$ with the
property that $X''$, with its induced positive cones (the canonical
ones from e.g.\ \cite{Sch}, which are easy to see are just the weak*
closure of $M_n(X)_+$), is a unital operator system (that is, it has
an order unit making its cones Archimidean \cite{CE}).  We sometimes
write $1$ for the identity $1_{X''}$ of $X''$.   We will show that,
under reasonable conditions, $X^+ = X^1 = {\rm Span}\{ X, 1_{X''}
\}$, the span in $X''$.  In \cite{K1} a condition on `approximate
matrix order unit spaces' is given which characterizes when $\tilde{X}
 = {\rm Span}\{ X, 1_{X''} \}$.
  We give a variant of this result, and include a short proof.

\begin{lemma} \label{note}  {\rm (Cf.\  \cite[Theorem 5.3]{K1})}
 Suppose that $X$ is
an ordered operator space, and that $X''$ is a unital operator
system as above.  Then $X^+ = {\rm Span}\{X,1_{X''} \}$ iff there is
no $v \in X$ dominating $1_{X''}$, and iff the distance $d(X,1)$ of
$1_{X''}$ from the copy of $X$ in $X''$ is $1$.
\end{lemma}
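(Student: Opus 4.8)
The plan is to compare $X^+$ with the concrete operator subsystem $S = {\rm Span}\{X,1_{X''}\}$ of $X''$ (we may assume $1_{X''}\notin X$, the unital case being trivial). Since the inclusion $X\to S$ is a completely positive complete contraction into a unital operator system, the universal property of $X^+$ recalled above (from \cite[Lemma 4.9 (c)]{OSWU}) produces a unital completely positive bijection $\phi\colon X^+\to S$, $x+\lambda e\mapsto x+\lambda 1_{X''}$, where $e$ denotes the unit of $X^+$. Thus $X^+=S$ precisely when $\phi^{-1}$ is completely positive, i.e.\ when the matrix cones $M_n(S)_+=M_n(S)\cap M_n(X'')_+$ are no larger than the images of $M_n(X^+)_+$. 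I would prove the two equivalences $(2)\Leftrightarrow(3)$ and $(1)\Leftrightarrow(2)$ separately, where $(1),(2),(3)$ denote the three asserted conditions.

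For $(2)\Leftrightarrow(3)$ I would work with the order-unit norm on the self-adjoint part of $X''$. Since $0\in X$ gives $d(X,1)\le\|1_{X''}\|=1$, condition $(3)$ just says $\|1_{X''}-x\|\ge 1$ for all $x\in X$. Replacing $x$ by its real part $\tfrac12(x+x^*)\in X$ only decreases $\|1_{X''}-x\|$, so one may test self-adjoint $x$, for which $\|1_{X''}-x\|<1$ is equivalent (by the order-unit norm formula in $X''$) to $x\ge\delta 1_{X''}$ for some $\delta>0$; rescaling, this says exactly that $X$ contains an element dominating $1_{X''}$, giving $\neg(3)\Leftrightarrow\neg(2)$. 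The easy half of $(1)\Leftrightarrow(2)$ is $\neg(2)\Rightarrow\neg(1)$: if $v\in X$ satisfies $v\ge 1_{X''}$, then $v-1_{X''}$ is a nonzero element of $S_+$, while $\phi^{-1}(v-1_{X''})=v-e$ has negative unit-coefficient and so is not positive in $X^+$, because the canonical state $x+\lambda e\mapsto\lambda$ on $X^+$ is positive (positivity of the scalar part is built into the \cite{OSWU} cone). Hence $\phi^{-1}$ is not positive and $X^+\ne S$.

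The \emph{crux} is $(2)\Rightarrow(1)$. Fix a unital complete order embedding $\Psi\colon X^+\hookrightarrow B(K)$ and put $T=\Psi|_X\colon X\to B(K)$, a completely positive complete contraction. Given $P=[v_{ij}+\lambda_{ij}1_{X''}]\in M_n(S)_+$, with $A=[\lambda_{ij}]$ Hermitian, I would first show $A\ge0$: compressing $P$ by a vector $\xi\in\Cdb^n$ gives $w+\langle A\xi,\xi\rangle 1_{X''}\ge0$ in $X''$ for some $w\in X$, and if $\langle A\xi,\xi\rangle<0$ then $w$ would dominate a positive multiple of $1_{X''}$, contradicting $(2)$. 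Then I would pass to the normal bidual extension $T''\colon X''\to B(K)''$ and set $b=T''(1_{X''})$, so that $0\le b\le 1$. Applying $T''_n$ to $P$ yields $[T(v_{ij})]+A\otimes b\ge0$; since $A\ge0$ forces $A\otimes(1-b)\ge0$, we obtain $[T(v_{ij})]+A\otimes 1\ge0$ in $M_n(B(K)'')$, hence in $M_n(B(K))$. As $[T(v_{ij})]+A\otimes 1=\Psi_n(\phi^{-1}(P))$ and $\Psi$ is a complete order embedding, this gives $\phi^{-1}(P)\in M_n(X^+)_+$, as required.

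The main obstacle is exactly this last step. The tempting route is to verify membership in $M_n(X^+)_+$ directly through the explicit \cite{OSWU}/Karn cone formula, i.e.\ to produce $u\in M_n(X)_+$ with $\|u\|<1$ and $v+(A+\epsilon)^{1/2}u(A+\epsilon)^{1/2}\ge0$; but under $(2)$ the space $X$ has no positive element dominating a multiple of $1_{X''}$, so it carries no approximate order unit inside itself, and this approach stalls. The second-dual argument circumvents the difficulty by capturing the unit as $b=T''(1_{X''})$ in the weak*-closed cone of $B(K)''$ rather than inside $T(X)$, after which the single inequality $b\le 1$, together with $A\ge0$, does all the work.
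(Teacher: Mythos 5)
Your proof is correct, and its crux is genuinely different from the paper's. The paper never leaves the framework of \cite{OSWU}: it first shows that the canonical map $X^+ \to (X'')^+$ is a unital complete order embedding (the universal property gives one direction, the cone definition \cite[Definition 4.7]{OSWU} the other), so that the cone of $M_n(X^+)$ is identified with the pairs $(v,A)$ such that $A \geq 0$ and $v + A 1_{X''} \geq 0$ in $M_n(X'')$; the equality $X^+ = {\rm Span}\{X,1_{X''}\}$ then becomes, by \cite[Lemma 4.9(b)]{OSWU}, the statement that positivity of $v + A1_{X''}$ forces $A \geq 0$, and the reduction to $n=1$ is cited from \cite{K1}. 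You instead prove the two key implications by hand: the $n=1$ reduction via compression by vectors $\xi \in \Cdb^n$ (which is essentially the argument behind the \cite{K1} citation), and---this is the real difference---membership of $(v,A)$ in $M_n(X^+)_+$ via a Choi--Effros representation $\Psi : X^+ \hookrightarrow B(K)$ together with the bidual estimate $0 \leq b = T''(1_{X''}) \leq 1$, which upgrades positivity of $[T(v_{ij})] + A \otimes b$ to positivity of $[T(v_{ij})] + A \otimes 1$ once $A \geq 0$, and then pulls back along the complete order embedding $\Psi$. Your route buys self-containedness (no unpacking of the \cite{OSWU} cone formula, no appeal to \cite{K1}) at the cost of invoking the representation theorem for operator systems and the complete positivity of bidual maps relative to the weak*-closure cones; the paper's route is shorter but leans on the cited machinery. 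For the distance criterion, both proofs use the same spectral-theory argument in one direction; in the other direction the paper produces a Hahn--Banach state annihilating $X$, whereas you use rescaling.

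One small imprecision in your $(2)\Leftrightarrow(3)$: the asserted pointwise equivalence of $\Vert 1_{X''} - x \Vert < 1$ with $x \geq \delta 1_{X''}$ for some $\delta > 0$ is false as stated (an element can dominate $1_{X''}$ and have large norm); only the forward implication holds pointwise. The converse, which is what $\neg(2) \Rightarrow \neg(3)$ needs, is exactly the ``rescaling'' you allude to, and it does work: if $v \geq 1_{X''}$ and $t = 2/(\Vert v \Vert + 1)$, then $(1-t)1_{X''} \geq 1_{X''} - tv \geq (1 - t\Vert v \Vert)1_{X''}$, so $\Vert 1_{X''} - tv \Vert \leq (\Vert v \Vert - 1)/(\Vert v \Vert + 1) < 1$, giving $d(X,1) < 1$. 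This one-line fix should be written out; it is precisely the step the paper instead handles with the Hahn--Banach state.
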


\begin{proof}   In this case,  by the remark at the end of
the first paragraph of this section, $X \subset X^+$ completely
isometrically. By the universal property of this construction,
stated at the start of this section, the canonical map $i : X^+ \to
(X'')^+$ is unital and completely positive. On the other hand, by
definition of this unitization (see \cite[Definition 4.7]{OSWU}), if
$(v,A) \in M_n(X \oplus \Cdb)$ is in $M_n((X'')^+)$ then it is easy
to see that it is in $M_n(X^+)$. Consequently $i$ is a complete
order embedding.   Thus $X^+$ is easy to describe explicitly in
terms of the simple unitization of the already unital operator
system $X''$. Namely, the positive cone of $M_n(X^+)$ consists of
pairs $(v,A)$, with $v \in M_n(X)$ and $A \in (M_n)_+$, such that $v
+ A 1_{X''}$ is positive in $M_n(X'')$. It follows now from
\cite[Lemma 4.9(b)]{OSWU} that we can identify $X^+$ with the span
of $X$ and $1_{X''}$ in $X''$,  iff whenever $v + A 1_{X''}$ is
positive in $M_n(X'')$ then $A \geq 0$. As noted in \cite{K1}, we
can take $n = 1$ in the latter condition.
 This, in turn, is clearly equivalent to there being no $v \in X$
 dominating $1_{X''}$.

Suppose that $d(X,1) < 1$.  Then there exists $x \in X$ with $\Vert
x - 1 \Vert < 1$. By the usual trick we may assume $x = x^*$. It
follows by spectral theory that there is a $\lambda > 0$ with $x
\geq \lambda 1$, so that $x/\lambda \geq 1$. Conversely, if $d(X,1)
= 1$, then by the Hahn-Banach theorem there exists a state $\varphi$
on $X''$ which annihilates the copy of $X$.  If $v \in X$ with $v
\geq 1_{X''}$, then $0 = \varphi(v) \geq 1$, a contradiction.
\end{proof}

\begin{theorem} \label{notenv}  If $X$ is
an ordered operator space with $X''$  a unital operator system as
above, then $X^1 = {\rm Span}\{X,1_{X''} \}$. In this case, $d(X,1)
= 1$ iff all unitizations of $X$ coincide.  In particular, the
latter is true if $X$ is not e.n.v. (which is generally the case).
This holds, for example, for any nonunital C*-algebra.
\end{theorem}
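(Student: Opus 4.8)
The plan is to prove the four assertions in turn, treating the identification $X^1 = {\rm Span}\{X,1_{X''}\}$ as the heart of the matter and reducing everything else to it, to Lemma \ref{note}, and to the descriptions of $X^+$ and $X^1$ as the smallest and largest completely isometric unitizations.

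First I would identify $X^1$. Put $U = {\rm Span}\{X,1_{X''}\} \subseteq X''$ with the cone inherited from $X''$. Since the canonical embedding $X \to X''$ is a completely positive complete isometry and $X''$ is a unital operator system, $U$ is a completely isometric unitization of $X$. I would then show that $U$ carries the largest matrix cones among all completely isometric unitizations, so that $U = X^1$ by the maximality of $X^1$ coming from Theorem \ref{upuni}. The mechanism: for any completely isometric unitization $\tilde{X}$, show the canonical unital map $\tilde{X} \to X''$ that fixes $X$ and sends $1_{\tilde{X}} \mapsto 1_{X''}$ is completely positive. This rests on the duality description of the bidual cone, namely that $M_n(X'')_+$ is the weak*-closure (bipolar) of $M_n(X)_+$ and so is detected by pairing with completely positive functionals on $X$, together with the order-unit identity $\langle 1_{X''}, f\rangle = \|f\|$ for positive $f$. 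Every matrix state of $\tilde{X}$ restricts to such a functional on $X$, and the order-unit identity converts positivity in $M_n(\tilde{X})$ into positivity in $M_n(X'')$, giving cone$(\tilde{X}) \subseteq$ cone$(U)$. The main obstacle here is the bookkeeping at the matrix level (extending matrix states of $X$ to $\tilde{X}$ and matching normalizations), which I expect to be routine but delicate.

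With $X^1 = U$ in hand, part (b) is quick: Lemma \ref{note} gives $X^+ = U = X^1$ iff $d(X,1)=1$. Since the universal property of $X^+$ stated at the start of Section 3 exhibits $X^+$ as the completely isometric unitization with the smallest cones, and Theorem \ref{upuni} exhibits $X^1$ with the largest, every completely isometric unitization lies cone-theoretically between them; hence all unitizations coincide iff $X^+ = X^1$, that is, iff $d(X,1)=1$.

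For part (c) I would prove the contrapositive: $d(X,1)<1 \Rightarrow X$ is e.n.v. By the argument in the second half of the proof of Lemma \ref{note}, $d(X,1)<1$ produces a selfadjoint $x' \in X$ with $x' \ge \epsilon\, 1_{X''}$ in $X''$ for some $\epsilon>0$. Passing along the complete isometry $X'' \hookrightarrow \mathcal{T}(X)''$, the image of $x'$ is bounded below by $\epsilon$ times the image $e$ of $1_{X''}$. The key input is the biduality of the ternary envelope (see \cite{Ham,BLM}): since $X''$ is a unital operator system, the relevant $W^*$-ternary envelope is a unital von Neumann algebra whose identity is $e = 1_{X''}$, so that $x' \ge \epsilon\, 1_{\mathcal{T}(X)''}$ is invertible with spectrum in $[\epsilon,\|x'\|]$. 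An odd continuous functional calculus, which keeps elements inside the $*$-TRO $\mathcal{T}(X)$, then yields $1_{\mathcal{T}(X)''} = f(x') \in \mathcal{T}(X)$, so $\mathcal{T}(X)$ is unital and $X$ is e.n.v.; contrapositively, not e.n.v. forces $d(X,1)=1$, whence all unitizations coincide by (b). I expect the identification $e = 1_{\mathcal{T}(X)''}$ (equivalently, invertibility rather than merely full support of the image of $1_{X''}$) to be the main obstacle, and this is precisely what the envelope-biduality supplies. Finally, for (d): a nonunital C*-algebra $A$ is densely spanned by its positive cone, $A''$ is a von Neumann algebra and hence a unital operator system with the stated induced cones, and $C^*_e(A) = A$ is nonunital so $A$ is not e.n.v.; thus (c) applies and all unitizations of $A$ coincide.
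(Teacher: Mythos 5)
Your parts (b) and (d) are fine, and your overall architecture (squeeze every completely isometric unitization between $X^+$ and $X^1$, then reduce everything to the identification $X^1 = {\rm Span}\{X,1_{X''}\}$) mirrors the discussion preceding Lemma \ref{note}. But the two steps you yourself flag as "the main obstacles" are genuine gaps, not bookkeeping, and in both cases the difficulty you wave at is essentially the theorem itself. For part (a): to show the canonical unital map $\tilde{X} \to {\rm Span}\{X,1_{X''}\}$ is completely positive, the bipolar description of $M_n(X'')_+$ forces you to verify, for \emph{every} completely positive $\psi : X \to M_k$ (with weak* extension $\Psi$ to $X''$) and every $v + A \otimes 1_{\tilde{X}} \geq 0$ in $M_n(\tilde{X})$, that $\psi_n(v) + A \otimes \Psi(1_{X''}) \geq 0$. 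Restricting matrix states of $\tilde{X}$ to $X$ runs the quantifiers backwards: what is needed is that every such $\psi$ \emph{extends} to a completely positive map on $\tilde{X}$ taking $1_{\tilde{X}}$ to the prescribed value $\Psi(1_{X''})$. This extension is not routine: Krein-type positive extension requires $M_n(X)$ to be cofinal in $M_n(\tilde{X})$, which fails exactly when no element of $X$ dominates the unit, i.e.\ when $d(X,1)=1$ --- the generic case by the very theorem being proved; and Arveson extension needs a unital domain. Indeed, the existence of such extensions for all $\tilde{X}$ is equivalent to the conclusion $X^1 = {\rm Span}\{X,1_{X''}\}$ together with Theorem \ref{upuni}, so your "delicate bookkeeping" silently assumes the heart of the matter. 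The paper's proof supplies this content by quite different means: it invokes \cite[Lemma 5.3]{BHN} to realize $C^*_e(X)$ as a C*-subalgebra $B$ of $C^*_e(X'')$, and then argues by cases, the delicate one being when $C^*_e(X)$ is unital with unit $p \neq 1_{X''}$; there it proves $d(X,1)=1$, produces a state $\varphi$ annihilating $X$, and shows the compression $\Phi(z) = pzp$ together with $z \mapsto (\Phi(z),\varphi(z))$ gives a unital completely isometric complete order isomorphism ${\rm Span}\{X,1_{X''}\} \to {\rm Span}\{X,p\} = X^1$.

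Part (c) has a second gap of the same kind. You need the image $e$ of $1_{X''}$ under $j'' : X'' \to \mathcal{T}(X)''$ to be the identity of the von Neumann algebra $\mathcal{T}(X)''$, and you attribute this to a "biduality of the ternary envelope" in \cite{Ham,BLM}; no such result delivers this. The order-unit property $-\|x\| e \leq j(x) \leq \|x\| e$ for $x \in X_{sa}$ only kills the corner $(1-q)j(x)(1-q)$, where $q$ is the support projection of $e$; products $j(x)j(y)^*j(z)$ can still have nonzero $(1-q)$-corners, so the C*-algebra generated by $j(X)$ need not live in the corner supported by $e$, and $e$ need not be invertible there. This compatibility of units is precisely what the paper extracts from \cite[Lemma 5.3]{BHN} (and even with that input it must separately treat the case $p \neq 1_{X''}$, where your desired identification is false and the conclusion $d(X,1)=1$ is rescued by the compression argument instead). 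So both pillars of your proposal rest on unproved claims that are, in substance, equivalent to the results they are meant to establish.
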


\begin{proof} Note that  $X''$ is spanned by its positive
elements, and it follows that $X$ is densely spanned by $X_+$. Hence
the ordered ternary envelope of $X$ is a C*-algebra $C^*_e(X)$. By
\cite[Lemma 5.3]{BHN} in conjunction with Theorem \ref{B}, we may
view $C^*_e(X)$ as a C*-subalgebra $B$ of $C^*_e(X'')$.  So if $X$
is not e.n.v., then we can identify $C^*_e(X)^1$ with the span of
$B$ and $1_{X''}$. Thus in this case $X^1$ is the span of $X$ and
$1_{X''}$ in $X''$, up to unital complete order isomorphism (and
complete isometry). Similarly, if $X$ is e.n.v., with $1_{X''} \in
C^*_e(X)$, then $X^1 = {\rm Span}\{ X, 1_{X''} \}$ by definition.
Suppose that $X$ is e.n.v., but the identity $p$ of $C^*_e(X)$ is
not $1_{X''}$. If $x \in X$ then $\Vert 1_{X''} - x \Vert = \Vert
1_{X''} - p x p \Vert \geq \Vert p^\perp \Vert = 1$.  Thus $d(X,1) =
1$.   As in Lemma \ref{note}, there exists a state $\varphi$ on
$X''$ annihilating the copy of $X$.  Let $Y = {\rm Span}\{ X,p\}$
and $Z = {\rm Span}\{ X, 1_{X''} \}$, spans taken in $C^*_e(X)''$.
The map $\Phi :  z \mapsto p z p$ on $C^*_e(X'')$ restricts to a
unital completely positive map $Z \to Y$.  The canonical
$*$-isomorphism $C^*_e(X) + \Cdb 1_{X''} \cong C^*_e(X)
\oplus^\infty \Cdb$ taking $z = x + \lambda 1_{X''}$ to $(x +
\lambda p , \lambda)$ for $x \in C^*_e(X), \lambda \in \Cdb$,
restricts on $Z$ to the map $z  \mapsto (\Phi(z), \varphi(z))$.
Since $\varphi$ is completely contractive, this forces $\Phi$ to be
completely isometric. Thus $\Phi$ is a completely isometric complete
order isomorphism, so that $X^1 \cong {\rm Span}\{ X, 1_{X''} \}$.

If $X$ is not e.n.v. then the condition in Lemma \ref{note} holds.
This is clear if $X$ is a C*-algebra, and the general case reduces
to this one as follows. In this case, we can identify $v + \lambda
1_{X''}$, as in the last paragraph, with $v + \lambda 1$ in
$C^*_e(A)^1$.  The condition in Lemma \ref{note} now follows by the
C*-algebra case.

The rest follows easily from the above proof, together with remarks
and facts above the theorem.
\end{proof}

{\em Remark.} If $X''$ is a unital operator system, as in the
discussion above, then it is easy to argue that $X$ is `maximally
ordered': i.e.\
there is no larger ordered operator space cone on $X$.  We
leave the details to the interested reader.

\medskip

{\em Example.}  We illustrate the two main concepts of our paper
in the very simple case of a selfadjoint subspace $X$
of matrices in $M_n$, by recalling for the 
general reader the usual
 recipe to compute the ordered noncommutative Shilov boundary;
and this gives the unitization $X^1$ explicitly.  
In practice, for example
given a concrete subspace of $M_5$ say, it is usually easy to
implement this recipe.   For simplicity in the discussion that
follows, suppose that $X$ is spanned by $X \cap (M_n)_+$, which is
equivalent to having a set of fewer than $n^2$ linearly independent
positive matrices which span $X$ (the general case is only
fractionally more complicated). Then by Theorem \ref{B}, the TRO
generated in $M_n$ is also the C*-algebra $B$ generated by $X$ in
$M_n$. However every finite dimensional C*-algebra `is just' a
direct sum of full matrix algebras
 $M_k$, for $k \leq n$.  Thus, we may rewrite $X \subset B = B_1 \oplus
  B_2 \oplus \cdots \oplus
B_n$, where each $B_i = M_k$, for some $k \leq n$.  We call the
$B_i$ `blocks', and let $p_i$ be the minimal central projection in
$B$ which `is' just the identity of block $B_i$ (direct summed with
several zeros). Some of these blocks may be unnecessary in the
computation of norms of matrices with entries in $X$, and we call
such a block `loose'.  More specifically, if $\Vert x \Vert = \Vert
x (1-p_i) \Vert$ for all $x \in X$, and more generally for all $x
\in M_n(X)$, then block $B_i$ is loose.   If we find a loose block
we remove it.  For example, if block $B_1$ is loose, then  we
replace $B$ by $B' = B_2 \oplus \cdots \oplus B_n$, and replace $X$
by the subset of $B'$ corresponding to $X (1-p_1)$. We then repeat
the procedure. After less than $n$ steps, there will be no more
loose blocks, and we will have arrived at the ternary $*$-envelope,
which in this case is $C^*_e(X)$, and by construction
it contains a completely
isometric copy of $X$.  The span of the identity of the
latter $C^*$-algebra and the copy of $X$ inside this $C^*$-algebra,
is $X^1$.

\medskip

{\bf Acknowledgement.}  We thank A. K. Karn for a correction
of a remark in an earlier version of this paper.

\end{document}